\title{The Viterbo--Maslov Index in Dimension Two}
\author{
Joel~W.~Robbin\\
University of Wisconsin
\and
\and
Dietmar~A.~Salamon\thanks{Partially supported by the 
Swiss National Science Foundation Grant 200021-127136}\\
ETH-Z\"urich}
\date{2 May 2012}
\newcommand{\MAT}[1]{\left[\begin{array}{#1}}
\newcommand{\RIX}{\end{array}\right]}
\newcommand{\p}{\partial}
\newcommand{\C}{{\mathbb C}}
\newcommand{\D}{{\mathbb D}}
\newcommand{\HH}{{\mathbb H}}
\newcommand{\N}{{\mathbb N}}
\newcommand{\R}{{\mathbb R}}
\newcommand{\Z}{{\mathbb Z}}
\newcommand{\cD}{{\mathcal D}}
\newcommand{\cP}{{\mathcal P}}
\newcommand{\Om}{{\Omega}}
\newcommand{\eps}{{\varepsilon}}
\renewcommand{\phi}{{\varphi}}
\newcommand{\tSi}{{\tilde{\Sigma}}}
\newcommand{\tal}{{\tilde{\alpha}}}
\newcommand{\tbe}{{\tilde{\beta}}}
\newcommand{\tLa}{{\tilde{\Lambda}}}
\newcommand{\tA}{{\tilde{A}}}
\newcommand{\tB}{{\tilde{B}}}
\newcommand{\tf}{{\tilde{f}}}
\newcommand{\tu}{{\tilde{u}}}
\newcommand{\tx}{{\tilde{x}}}
\newcommand{\ty}{{\tilde{y}}}
\newcommand{\tz}{{\tilde{z}}}
\newcommand{\im}{{\rm im }}        
\newcommand{\id}{{\rm id}}         
\newcommand{\IM}{{\rm Im }}        
\renewcommand{\Re}{{\rm Re}}       
\newcommand{\Diff}{{\rm Diff}}        
\newcommand{\w}{{\rm w}}
\renewcommand{\i}{{\mathbf{i}}}
\newcommand{\PSL}{{\rm PSL}}
\newcommand{\Cinf}{C^{\infty}}
\newcommand{\RP}{{\mathbb{R}\mathrm{P}}}
\def\NABLA#1{{\mathop{\nabla\kern-.5ex\lower1ex\hbox{$#1$}}}}
\def\Nabla#1{\nabla\kern-.5ex{}_{#1}}
\def\Abs#1{\left|#1\right|}
\renewcommand{\p}{{\partial}}
\newtheorem{theorem}{Theorem}[section]
\newtheorem{lemma}[theorem]{Lemma}
\newtheorem{proposition}[theorem]{Proposition}
\newtheorem{definition}[theorem]{Definition}
\newtheorem{remark}[theorem]{Remark}
\newtheorem{example}[theorem]{Example}
\begin{document}

\maketitle

\begin{abstract}
We prove a formula that expresses the Viterbo--Maslov index 
of a smooth strip in an oriented $2$-manifold with boundary 
curves contained in $1$-dimensional submanifolds 
in terms the degree function on the complement of the
union of the two submanifolds.
\end{abstract}



\section{Introduction}\label{sec:intro}

We assume throughout this paper that $\Sigma$ 
is a connected oriented 2-manifold without boundary
and $\alpha,\beta\subset\Sigma$ are connected smooth 
one dimensional oriented submanifolds without boundary 
which are closed as subsets of $\Sigma$ and intersect transversally.
We do not assume that $\Sigma$ is compact, but when it is, 
$\alpha$ and $\beta$ are embedded circles. 
Denote the standard half disc by
$$
\D:=\{z\in\C\,|\,\IM\,z\ge 0,\,|z|\le 1\}.
$$
Let $\cD$ denote the space of all smooth maps $u:\D\to\Sigma$ 
satisfying the boundary conditions
$u(\D\cap\R)\subset\alpha$ and $u(\D\cap S^1)\subset\beta$.
For $x,y\in\alpha\cap\beta$ let $\cD(x,y)$
denote the subset of all $u\in\cD$ satisfying the endpoint 
conditions $u(-1)=x$ and $u(1)=y$.
Each $u\in\cD$ determines a locally constant function
${\w:\Sigma\setminus(\alpha\cup\beta)\to\Z}$
defined as the degree
$$
\w(z) := \deg(u,z), \qquad z\in\Sigma\setminus(\alpha\cup\beta).
$$
When $z$ is a regular value of $u$ this is the algebraic 
number of points in the preimage $u^{-1}(z)$. 
The function $\w$ depends only on the homotopy class of~$u$.
We prove that the homotopy class of $u$ is uniquely determined 
by its endpoints $x,y$ and its degree function $\w$
(Theorem~\ref{thm:trace}).  
The main theorem of this paper asserts that the 
Viterbo--Maslov index of an element ${u\in\cD(x,y)}$ 
is given by the formula
\begin{equation}\label{eq:MV}
\mu(u) = \frac{m_x+m_y}{2},
\end{equation}
where $m_x$ denotes the sum of the four values 
of $\w$ encountered when walking along a 
small circle surrounding $x$, and similarly for $y$
(Theorem~\ref{thm:maslov}). The formula~\eqref{eq:MV}
plays a central role in our combinatorial approach~\cite{DESILVA,RSS}
to Floer homology~\cite{FLOER1,FLOER2}.
An appendix contains a proof that the space of paths 
connecting $\alpha$ to $\beta$ is simply 
connected under suitable assumptions.

\medskip\noindent{\bf Acknowledgement.}
We thank David Epstein for explaining 
to us the proof of Proposition~\ref{prop:dbae}.


\section{Chains and Traces}\label{sec:CT}

Define a cell complex structure on $\Sigma$ by taking  
the set of zero-cells to be the set $\alpha\cap\beta$,
the set of one-cells to be  the set of connected components 
of $(\alpha\setminus\beta)\cup(\beta\setminus\alpha)$
with compact closure, and the set of two-cells to be the 
set of connected components of $\Sigma\setminus(\alpha\cup\beta)$
with compact closure.  (There is an abuse of language here 
as the ``two-cells'' need not be  homeomorphs of the open unit
disc if the genus of $\Sigma$ is positive and the ``one-cells" 
need not be arcs if $\alpha\cap\beta=\emptyset$.)
Define a boundary operator $\p$ as follows. 
For each two-cell $F$ let 
$
\p F=\sum \pm E, 
$
where the sum is over the one-cells $E$ which abut $F$ and 
the plus sign is chosen iff the orientation of $E$ (determined 
from the given orientations of $\alpha$ and $\beta$)
agrees with the boundary orientation of $F$
as a connected open subset of the oriented manifold  $\Sigma$.
For each one-cell $E$ let 
$
\p E=b-a
$
where $a$ and $b$ are the endpoints of the arc $E$ and the 
orientation of $E$ goes from $a$ to $b$. (The one-cell $E$ 
is either a subarc of $\alpha$ or a subarc of $\beta$ and both 
$\alpha$ and $\beta$ are oriented one-manifolds.) 
For $k=0,1,2$ a {\bf $k$-chain} is defined to be a formal linear
combination (with integer coefficients) of $k$-cells, i.e.\ 
a two-chain is a locally constant map 
$\Sigma\setminus(\alpha\cup\beta)\to\Z$ 
(whose support has compact closure in $\Sigma$)
and a one-chain is a locally constant map 
$(\alpha\setminus\beta)\cup(\beta\setminus\alpha)\to\Z$
(whose support has compact closure in $\alpha\cup\beta$).
It follows directly from the definitions that
$\p^2F=0$  for each two-cell $F$.

Each $u\in\cD$ determines a two-chain $\w$ via
\begin{equation}\label{eq:w}
\w(z) := \deg(u,z), \qquad z\in\Sigma\setminus(\alpha\cup\beta).
\end{equation}
and a one-chain $\nu$ via
\begin{equation}\label{eq:nu}
\nu(z) := 
\left\{\begin{array}{rl}
\deg(u\big|_{\p\D\cap\R\;\,}:\p\D\cap\R\;\to\alpha,z),&
\mbox{for } z\in\alpha\setminus\beta, \\
-\deg(u\big|_{\p\D\cap S^1}:\p\D\cap S^1\to\beta,z),&
\mbox{for } z\in\beta\setminus\alpha.
\end{array}\right.
\end{equation}
Here we orient the one-manifolds $\D\cap\R$ and 
$\D\cap S^1$ from $-1$ to $+1$.  For any one-chain 
$\nu:(\alpha\setminus\beta)\cup(\beta\setminus\alpha)\to\Z$
denote 
$$
\nu_\alpha:=\nu|_{\alpha\setminus\beta}:
\alpha\setminus\beta\to\Z,\qquad
\nu_\beta:=\nu|_{\alpha\setminus\beta}:
\beta\setminus\alpha\to\Z.
$$
Conversely, given locally constant functions 
$\nu_\alpha:\alpha\setminus\beta\to\Z$ and
$\nu_\beta:\beta\setminus\alpha\to\Z$,
denote by $\nu=\nu_\alpha-\nu_\beta$ the one-chain that 
agrees with $\nu_\alpha$ on $\alpha\setminus\beta$ and 
agrees with $-\nu_\beta$ on $\beta\setminus\alpha$.

\begin{definition}[{\bf Traces}]\label{def:trace}\rm
Fix two (not necessarily distinct)
intersection points $x,y\in\alpha\cap\beta$.

\smallskip\noindent{\bf (i)}
Let $\w:\Sigma\setminus(\alpha\cup\beta)\to\Z$
be a two-chain.  The triple 
$
\Lambda=(x,y,\w)
$
is called an {\bf $(\alpha,\beta)$-trace} if 
there exists an element $u\in\cD(x,y)$ such that
$\w$ is given by~\eqref{eq:w}. 
In this case $\Lambda=:\Lambda_u$ 
is also called the {\bf $(\alpha,\beta)$-trace of $u$}
and we sometimes write $\w_u:=\w$.

\smallskip\noindent{\bf (ii)}
Let $\Lambda=(x,y,\w)$ be an $(\alpha,\beta)$-trace.
The triple $\p\Lambda:=(x,y,\p\w)$
is called the {\bf boundary of $\Lambda$.}

\smallskip\noindent{\bf (iii)}
A one-chain 
$\nu:(\alpha\setminus\beta)\cup(\beta\setminus\alpha)\to\Z$
is called an {\bf $(x,y)$-trace} if there exist 
smooth curves $\gamma_\alpha:[0,1]\to\alpha$
and $\gamma_\beta:[0,1]\to\beta$ such that
$\gamma_\alpha(0)=\gamma_\beta(0)=x$,
$\gamma_\alpha(1)=\gamma_\beta(1)=y$,
$\gamma_\alpha$ and $\gamma_\beta$ 
are homotopic in $\Sigma$ with fixed endpoints, and 
\begin{equation}\label{eq:admissible}
\nu(z)=\left\{\begin{array}{rl}
\deg(\gamma_\alpha,z),&\mbox{for }z\in\alpha\setminus\beta,\\
-\deg(\gamma_\beta,z),&\mbox{for }z\in\beta\setminus\alpha.
\end{array}\right.
\end{equation}
\end{definition}

\begin{remark}\label{rmk:nu}\rm
Assume $\Sigma$ is simply connected.
Then the condition on $\gamma_\alpha$ and $\gamma_\beta$ 
to be homotopic with fixed endpoints is redundant.
Moreover, if $x=y$ then a one-chain $\nu$ 
is an $(x,y)$-trace if and only if the restrictions
$\nu_\alpha:=\nu|_{\alpha\setminus\beta}$ and 
$\nu_\beta:=-\nu|_{\beta\setminus\alpha}$ are constant.
If $x\ne y$ and $\alpha,\beta$ are embedded circles 
and $A,B$ denote the positively oriented arcs from $x$ to $y$ 
in $\alpha,\beta$, then a one-chain $\nu$ 
is an $(x,y)$-trace if and only if 
$
\nu_\alpha|_{\alpha\setminus(A\cup\beta)}=\nu_\alpha|_{A\setminus\beta}-1
$
and
$
\nu_\beta|_{\beta\setminus(B\cup\alpha)}=\nu_\beta|_{B\setminus\alpha}-1.
$
In particular, when walking along $\alpha$ or $\beta$, 
the function $\nu$ only changes its value at $x$ and $y$.
\end{remark}

\begin{lemma}\label{le:boundary}
Let $x,y\in\alpha\cap\beta$ and $u\in\cD(x,y)$.
Then the boundary of the $(\alpha,\beta)$-trace
$\Lambda_u$ of $u$ is the triple 
$
\p\Lambda_u=(x,y,\nu),
$
where $\nu$ is given by~\eqref{eq:nu}.
In other words, if $\w$ is given by~\eqref{eq:w}
and $\nu$ is given by~\eqref{eq:nu}
then $\nu=\p\w$.
\end{lemma}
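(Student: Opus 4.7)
My plan is to reduce the claim to the equality of one-chains $\partial\w=\nu$ — since the endpoints $x$ and $y$ are built into both sides of the asserted identity — and then to verify this equality one 1-cell at a time by a direct local count of boundary preimages at a generic regular value.

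For the first case I would fix $E\subset\alpha\setminus\beta$ and let $F_L$, $F_R$ denote the 2-cells immediately to the left and right of $E$ with respect to the orientation of $E$ coming from $\alpha$ and that of $\Sigma$; these need not be distinct globally. From the boundary convention one checks that $E$ appears in $\partial F_L$ with coefficient $+1$ and in $\partial F_R$ with $-1$, so $(\partial\w)(E)=\w(F_L)-\w(F_R)$ in all cases. Next I would choose $z\in E$ a simultaneous regular value of $u$ and $u|_{\D\cap\R}$, with boundary preimages $\tilde z_1,\dots,\tilde z_m$. In coordinates adapted to $\D\cap\R$ on the source and to $\alpha$ on the target, write $du(\tilde z_i)(1,0)=(a_i,0)$ and $du(\tilde z_i)(0,1)=(b_i,c_i)$ with $a_ic_i\ne 0$; set $\epsilon_i:=\sign(a_i)$, so that $\nu(E)=\sum_i\epsilon_i$. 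A half-disc neighborhood of $\tilde z_i$ in $\D$ maps diffeomorphically to the $F_L$-side of $\alpha$ if $c_i>0$ (and then contributes $\sign(a_ic_i)=+\epsilon_i$ to $\w(F_L)$ and nothing to $\w(F_R)$), and to the $F_R$-side if $c_i<0$ (contributing $-\epsilon_i$ to $\w(F_R)$ and nothing to $\w(F_L)$); either way the net contribution of $\tilde z_i$ to $\w(F_L)-\w(F_R)$ is $+\epsilon_i$. Interior preimages of $z$ yield preimages of nearby regular values on both sides of $\alpha$ with identical Jacobian signs and so cancel in the difference, giving $(\partial\w)(E)=\sum_i\epsilon_i=\nu(E)$.

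For $E\subset\beta\setminus\alpha$ the same local analysis applies, except that the orientation of $\D\cap S^1$ chosen in \eqref{eq:nu} — from $-1$ to $+1$ — is opposite to the boundary orientation of $\D$. Carrying the computation through with this reversal introduces an overall minus sign that precisely matches the minus appearing in \eqref{eq:nu}, yielding $(\partial\w)(E)=-\deg(u|_{\D\cap S^1},z)=\nu(E)$.

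The main pitfall I expect is the orientation bookkeeping — in particular the fact that $\D\cap S^1$ is oriented oppositely to the natural boundary orientation of $\D$, which is exactly what the minus sign in \eqref{eq:nu} is compensating for. No homotopy-invariance or global argument is needed, since the identity is verified 1-cell by 1-cell from the local structure of $u$ near generic boundary preimages.
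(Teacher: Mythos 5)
Your proof is correct and rests on the same local degree argument as the paper's. The only real difference is in the packaging: the paper chooses an embedded arc $\gamma:[-1,1]\to\Sigma$ crossing $\alpha$ transversally exactly once at $\gamma(0)\in E$, with $\gamma(\pm1)$ regular values on the two sides of $E$, and then applies the boundary formula to the oriented compact $1$-manifold $u^{-1}(\gamma([-1,1]))\subset\D$. That the signed count of its boundary points vanishes gives $\w(\gamma(1))=\w(\gamma(-1))+\nu(\gamma(0))$ in a single stroke, with the interior and boundary preimages of $\gamma(0)$ absorbed uniformly into the cobordism. You perform the equivalent count by hand at the preimages of a generic $z\in E$: you observe that the interior preimages produce matching contributions to $\w$ on both sides of $\alpha$ and hence cancel in $\w(F_L)-\w(F_R)$, while each boundary preimage $\tilde z_i$ populates exactly one side of $\alpha$ with the sign $\epsilon_i$. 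The orientation bookkeeping is identical in both versions, including the sign flip on $\D\cap S^1$ that cancels against the minus sign in~\eqref{eq:nu}. The cobordism formulation is a little slicker in that it avoids the explicit interior-versus-boundary case split, but your direct local analysis is complete and correct.
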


\begin{proof}
Choose an embedding $\gamma:[-1,1]\to\Sigma$ 
such that $u$ is transverse to $\gamma$,  
$\gamma(t)\in\Sigma\setminus(\alpha\cup\beta)$ for $t\ne 0$, 
$\gamma(-1)$, $\gamma(1)$ are regular values of $u$, 
${\gamma(0)\in\alpha\setminus\beta}$ 
is a regular value of $u|_{\D\cap\R}$,
and $\gamma$ intersects $\alpha$ transversally 
at $t=0$ such that orientations match in
$$
T_{\gamma(0)}\Sigma = T_{\gamma(0)}\alpha \oplus\R\dot\gamma(0).
$$
Denote $\Gamma:=\gamma([-1,1])$. Then $u^{-1}(\Gamma)\subset\D$ 
is a $1$-dimensional submanifold with boundary
$$
\p u^{-1}(\Gamma) = u^{-1}(\gamma(-1))\cup u^{-1}(\gamma(1))
\cup \bigl(u^{-1}(\gamma(0))\cap\R)\bigr).
$$
If $z\in u^{-1}(\Gamma)$ then 
$$
\im\,du(z)+T_{u(z)}\Gamma=T_{u(z)}\Sigma,\qquad
T_zu^{-1}(\Gamma)=du(z)^{-1}T_{u(z)}\Gamma.
$$ 
We orient $u^{-1}(\Gamma)$ such that the orientations 
match in
$$
T_{u(z)}\Sigma = T_{u(z)}\Gamma\oplus du(z)\i T_zu^{-1}(\Gamma).
$$
In other words, if $z\in u^{-1}(\Gamma)$ and $u(z)=\gamma(t)$, 
then a nonzero tangent vector $\zeta\in T_zu^{-1}(\Gamma)$
is positive if and only if the pair $(\dot\gamma(t),du(z)\i\zeta)$ 
is a positive basis of $T_{\gamma(t)}\Sigma$.
Then the boundary orientation of $u^{-1}(\Gamma)$ 
at the elements of $u^{-1}(\gamma(1))$ agrees with the 
algebraic count  in the definition of $\w(\gamma(1))$,  
at the elements of $u^{-1}(\gamma(-1))$ is opposite 
to the algebraic count in the definition of $\w(\gamma(-1))$,
and at the elements of $u^{-1}(\gamma(0))\cap\R$ is opposite 
to the algebraic count in the definition of $\nu(\gamma(0))$.
Hence
$$
\w(\gamma(1)) = \w(\gamma(-1)) + \nu(\gamma(0)).
$$
In other words the value of $\nu$ at a point in
$\alpha\setminus\beta$ is equal to the value 
of~$\w$ slightly to the left of $\alpha$ 
minus the value of~$\w$ slightly to the right of~$\alpha$.
Likewise, the value of $\nu$ at a point in
$\beta\setminus\alpha$ is equal to the value 
of~$\w$ slightly to the right of $\beta$ 
minus the value of~$\w$ slightly to the left of~$\beta$.
This proves Lemma~\ref{le:boundary}.
\end{proof}

\begin{theorem}\label{thm:trace}
\smallskip\noindent{\bf (i)}
Two elements of $\cD$ belong to the same connected
component of~$\cD$ if and only if they have 
the same $(\alpha,\beta)$-trace.

\smallskip\noindent{\bf (ii)}
Assume $\Sigma$ is diffeomorphic to the two-sphere.
Then $\Lambda=(x,y,\w)$
is an $(\alpha,\beta)$-trace if and only if 
$\p\w$ is an $(x,y)$-trace.

\smallskip\noindent{\bf (iii)}
Assume $\Sigma$ is not diffeomorphic to the two-sphere
and let $x,y\in\alpha\cap\beta$. If $\nu$ is an $(x,y)$-trace,
then there is a unique two-chain $\w$ such that 
$\Lambda:=(x,y,\w)$ is an $(\alpha,\beta)$-trace
and $\p\w=\nu$.
\end{theorem}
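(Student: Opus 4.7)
The proof treats all three parts by a single scheme: given two maps in $\cD(x,y)$, one first arranges that they agree on $\p\D$ and then glues along $\p\D$ into a map $v:S^2\to\Sigma$ whose homotopy class is the obstruction to a homotopy between them. The key topological inputs are that $\pi_2(\Sigma)=0$ whenever $\Sigma\ne S^2$, and that self-maps of $S^2$ are classified up to homotopy by degree.

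For part~(i), the forward implication is immediate: a path in $\cD$ preserves the endpoints, which lie in the discrete set $\alpha\cap\beta$, and the degree function. For the converse, let $u_0,u_1\in\cD(x,y)$ share the trace $(x,y,\w)$; by Lemma~\ref{le:boundary} both induce the same boundary one-chain $\p\w$. Since any two curves in a connected oriented $1$-manifold between fixed endpoints and with matching degree functions are homotopic rel endpoints, I can homotope $u_0$ and $u_1$ inside $\cD(x,y)$ to arrange $u_0|_{\p\D}=u_1|_{\p\D}$. Concatenating $u_0$ and $\bar u_1$ along $\p\D$ gives $v:S^2\to\Sigma$ whose local degree is $\w(z)-\w(z)=0$ on every component of $\Sigma\setminus(\alpha\cup\beta)$. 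If $\Sigma\ne S^2$ then $\pi_2(\Sigma)=0$ and $v$ is null-homotopic; if $\Sigma=S^2$ then $v$ is a degree-zero self-map of the sphere and hence null-homotopic. Unfolding a null-homotopy supplies a path from $u_0$ to $u_1$ in $\cD(x,y)$.

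For parts~(ii) and~(iii), choose witnesses $\gamma_\alpha,\gamma_\beta$ for the $(x,y)$-trace $\nu$. Since these curves are homotopic in $\Sigma$ rel endpoints, the homotopy, reparametrized to the half-disk $\D$, produces $u_0\in\cD(x,y)$ with $\p\w_{u_0}=\nu$ by Lemma~\ref{le:boundary}. For part~(iii), where $\Sigma\ne S^2$, set $\w:=\w_{u_0}$ for existence; uniqueness follows from the $\pi_2(\Sigma)=0$ gluing argument of the previous paragraph applied to any two maps with boundary one-chain $\nu$. For part~(ii), where $\Sigma=S^2$ and a specific two-chain $\w$ with $\p\w=\nu$ is prescribed a priori, the difference $\w-\w_{u_0}$ is a cycle in the cellular chain complex, hence locally constant on $\Sigma\setminus(\alpha\cup\beta)$ with zero jump across every one-cell, and therefore equal to a global constant $c\in\Z$ on $S^2$. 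Grafting a degree-$c$ bubble $S^2\to S^2$ into $u_0$ through a small open disk $B\subset\INT\D$ yields $u\in\cD(x,y)$ with $\w_u=\w_{u_0}+c=\w$.

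The main obstacle will be the one-dimensional normalization step used in parts~(i) and~(iii): two curves in $\alpha$ (or $\beta$) from $x$ to $y$ with the same degree function must be connected through a family of maps in $\cD(x,y)$, and the normalizing homotopies on the two boundary arcs must be compatible at the corner points $\pm 1$ of $\p\D$. The $\pi_2(\Sigma)=0$ vanishing and the bubbling construction are elementary once this normalization has been carried out.
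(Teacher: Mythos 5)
Your argument is correct and proves all three parts, but it is packaged differently from the paper's. The paper works entirely in the path space $\cP(x,y;\Sigma)$: it observes that $u\in\cD(x,y)$ corresponds to a path $F_u$ in $\cP(x,y;\Sigma)$ joining $\cP(x,y;\alpha)$ to $\cP(x,y;\beta)$, that $\p\w_u$ determines the homotopy classes of the endpoint paths $\gamma_{u,\alpha},\gamma_{u,\beta}$ via Lemma~\ref{le:boundary}, and then invokes the fact that each component of $\cP(x,y;\Sigma)$ is contractible when $\Sigma\ne S^2$ (for $\Sigma=S^2$ it uses the $\Z=\pi_1(\cP)$-action on $\pi_0(\cD)$, which shifts $\w$ by a constant). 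You instead glue two half-disk maps with matching boundaries into a sphere $v:S^2\to\Sigma$ and apply $\pi_2(\Sigma)=0$ (or degree zero for $\Sigma=S^2$) directly. The two routes rest on exactly the same topological fact, $\pi_2(\Sigma)=0$ for $\Sigma\ne S^2$, and both use the same bubble/connect-sum device for existence in part~(ii); the paper literally replaces $u'$ by $u'\# v$ for a degree-$d$ map $v:S^2\to\Sigma$. Your version is arguably more hands-on, avoiding explicit mention of the path space and its homotopy groups.

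The step you flag as ``the main obstacle'' is in fact routine and is effectively what Lemma~\ref{le:boundary} already provides: the restrictions $u_i|_{\D\cap\R}$ to $\alpha$ from $x$ to $y$ are classified rel endpoints by their degree function $\nu_\alpha$ (since $\alpha$ is $S^1$ or $\R$), and likewise for $\beta$; the two boundary homotopies are automatically compatible at $\pm1$ because both fix $x$ and $y$; and the homotopy extension property for $(\D,\p\D)$ extends the normalized boundary homotopy over $\D$, staying in $\cD(x,y)$ and hence preserving $\w$. Once that is said, the gluing and $\pi_2$-vanishing arguments close all three parts as you describe. Note also for part~(iii) that, after observing $\w-\w_{u_0}$ is a $2$-cycle, the two cases actually diverge: for non-compact $\Sigma$ the compact-support condition already forces the cycle to vanish, whereas for compact $\Sigma$ of positive genus the cycle could a priori be a nonzero constant and you genuinely need the gluing/$\pi_2=0$ argument (or equivalently, that every $v:S^2\to\Sigma$ has vanishing degree function) to kill it; your write-up handles both uniformly, which is fine, but it is worth being aware the compact case is where the topological input is doing real work.
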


\begin{proof} 
We prove~(i). ``Only if'' follows from the standard 
arguments in degree theory as in Milnor~\cite{M}.  
To prove ``if'', fix two intersection points 
$$
x,y\in\alpha\cap\beta
$$ 
and, for $X=\Sigma,\alpha,\beta$, 
denote by $\cP(x,y;X)$ the space of all smooth curves 
${\gamma:[0,1]\to X}$ satisfying $\gamma(0)=x$ and $\gamma(1)=y$.
Every $u\in\cD(x,y)$ determines smooth paths 
$
\gamma_{u,\alpha}\in\cP(x,y;\alpha)
$
and
$
\gamma_{u,\beta}\in\cP(x,y;\beta)
$ 
via
\begin{equation}\label{eq:gaualbe}
\gamma_{u,\alpha}(s):=u(-\cos(\pi s),0),\qquad
\gamma_{u,\beta}(s)=u(-\cos(\pi s),\sin(\pi s)).
\end{equation}
These paths are homotopic in $\Sigma$ with fixed
endpoints. An explicit homotopy is the map 
$$
F_u:=u\circ\phi:[0,1]^2\to\Sigma
$$
where $\phi:[0,1]^2\to\D$ is the map
$$
\phi(s,t):=(-\cos(\pi s),t\sin(\pi s)).
$$
By Lemma~\ref{le:boundary}, he homotopy class of 
$\gamma_{u,\alpha}$ in $\cP(x,y;\alpha)$ is uniquely determined by 
$\nu_\alpha:=\p\w_u|_{\alpha\setminus\beta}:\alpha\setminus\beta\to\Z$
and that of $\gamma_{u,\beta}$ in $\cP(x,y;\beta)$ is uniquely determined 
by $\nu_\beta:=-\p\w_u|_{\beta\setminus\alpha}:\beta\setminus\alpha\to\Z$.  
Hence they are both uniquely determined by the $(\alpha,\beta)$-trace
of $u$. If $\Sigma$ is not diffeomorphic to the $2$-sphere 
the assertion follows from the fact that
each component of $\cP(x,y;\Sigma)$ is contractible
(because the universal cover of $\Sigma$ is diffeomorphic
to the complex plane).  Now assume $\Sigma$ is diffeomorphic 
to the $2$-sphere. Then $\pi_1(\cP(x,y;\Sigma))=\Z$
acts on $\pi_0(\cD)$ because the correspondence $u\mapsto F_u$
identifies $\pi_0(\cD)$ with a space of homotopy classes of 
paths in $\cP(x,y;\Sigma)$ connecting 
$\cP(x,y;\alpha)$ to $\cP(x,y;\beta)$.   
The induced action on the space of two-chains 
$\w:\Sigma\setminus(\alpha\cup\beta)$ is given by adding 
a global constant.  Hence the map $u\mapsto\w$ induces an injective 
map 
$$
\pi_0(\cD(x,y))\to\{\mbox{$2$-chains}\}.
$$
This proves~(i).

\smallbreak

We prove~(ii) and~(iii). 
Let $\w$ be a two-chain, suppose that 
$$
\nu:=\p\w
$$
is an $(x,y)$-trace, and denote
$$
\Lambda := (x,y,\w).
$$
Let ${\gamma_\alpha:[0,1]\to\alpha}$ and 
${\gamma_\beta:[0,1]\to\beta}$ 
be as in Definition~\ref{def:trace}.
Then there is a $u'\in\cD(x,y)$ such that the map
$s\mapsto u'(-\cos(\pi s),0)$ is homotopic to $\gamma_\alpha$
and $s\mapsto u'(-\cos(\pi s),\sin(\pi s))$ is homotopic to 
$\gamma_\beta$.  By definition the $(\alpha,\beta)$-trace of $u'$
is $\Lambda'=(x,y,\w')$ for some two-chain $\w'$.  
By Lemma~\ref{le:boundary}, we have
$$
\p\w'=\nu=\p\w
$$
and hence $\w-\w'=:d$ is constant.
If $\Sigma$ is not diffeomorphic to the two-sphere
and $\Lambda$ is the $(\alpha,\beta)$-trace of some element 
$u\in\cD$, then $u$ is homotopic to $u'$ (as $\cP(x,y;\Sigma)$ 
is simply connected) and hence $d=0$ and $\Lambda=\Lambda'$.
If $\Sigma$ is diffeomorphic to the $2$-sphere choose
a smooth map $v:S^2\to \Sigma$ of degree $d$ and replace $u'$ 
by the connected sum $u:=u'\# v$.  
Then $\Lambda$ is the $(\alpha,\beta)$-trace of $u$.
This proves Theorem~\ref{thm:trace}.
\end{proof}

\begin{remark}\label{rmk:trace}\rm
Let $\Lambda=(x,y,\w)$ be an 
$(\alpha,\beta)$-trace
and define 
$$
\nu_\alpha:=\p\w|_{\alpha\setminus\beta},\qquad
\nu_\beta:=-\p\w|_{\beta\setminus\alpha}.
$$

\smallskip\noindent{\bf (i)}
The two-chain $\w$ is uniquely 
determined by the condition $\p\w=\nu_\alpha-\nu_\beta$ 
and its value at one point.  To see this, think of the 
embedded circles $\alpha$ and $\beta$ as traintracks. 
Crossing $\alpha$ at a point $z\in\alpha\setminus\beta$
increases $\w$ by $\nu_\alpha(z)$
if the train comes from the left, and decreases it 
by~$\nu_\alpha(z)$ if the train comes from the right.
Crossing $\beta$ at a point $z\in\beta\setminus\alpha$
decreases $\w$ by $\nu_\beta(z)$
if the train comes from the left 
and increases it by $\nu_\beta(z)$
if the train comes from the right.
Moreover, $\nu_\alpha$ extends continuously to $\alpha\setminus\{x,y\}$
and $\nu_\beta$ extends continuously to $\beta\setminus\{x,y\}$.
At each intersection point $z\in(\alpha\cap\beta)\setminus\{x,y\}$ 
with intersection index $+1$ (respectively $-1$)
the function $\w$ takes the values
$$
k,\quad
k+\nu_\alpha(z),\quad
k+\nu_\alpha(z)-\nu_\beta(z),\quad
k-\nu_\beta(z)
$$ 
as we march counterclockwise (respectively clockwise)
along a small circle surrounding the intersection point.

\smallskip\noindent{\bf (ii)}
If $\Sigma$ is not diffeomorphic to the $2$-sphere
then, by Theorem~\ref{thm:trace}~(iii), 
the $(\alpha,\beta)$-trace $\Lambda$ is uniquely determined 
by its boundary $\p\Lambda=(x,y,\nu_\alpha-\nu_\beta)$.

\smallskip\noindent{\bf (iii)}
Assume $\Sigma$ is not diffeomorphic to the $2$-sphere
and choose a universal covering $\pi:\C\to\Sigma$.
Choose a point $\tx\in\pi^{-1}(x)$ and lifts 
$\tal$ and $\tbe$ of~$\alpha$ and~$\beta$ 
such that
$
\tx\in\tal\cap\tbe.
$
Then $\Lambda$ lifts to an $(\tal,\tbe)$-trace  
$$
\tLa = (\tx,\ty,\tilde\w).
$$
More precisely, the one chain $\nu:=\nu_\alpha-\nu_\beta=\p\w$ 
is an $(x,y)$-trace, by Lemma~\ref{le:boundary}.  The paths 
$\gamma_\alpha:[0,1]\to\alpha$ and 
$\gamma_\beta:[0,1]\to\beta$ in Definition~\ref{def:trace} 
lift to unique paths $\gamma_\tal:[0,1]\to\tal$ and 
$\gamma_\tbe:[0,1]\to\tbe$ connecting $\tx$ to $\ty$.
For $\tz\in\C\setminus(\tA\cup\tB)$ the number
$\tilde\w(\tz)$ is the winding number of the loop 
$
\gamma_\tal-\gamma_\tbe
$ 
about $\tz$ (by Rouch\'e's theorem). 
The two-chain $\w$ is then given by 
$$
\w(z)
= \sum_{\tz\in\pi^{-1}(z)} \tilde\w(\tz),\qquad
z\in\Sigma\setminus(\alpha\cup\beta).
$$
To see this, lift an element $u\in\cD(x,y)$ with 
$(\alpha,\beta)$-trace $\Lambda$ to the 
universal cover to obtain an element 
$\tu\in\cD(\tx,\ty)$ with $\Lambda_\tu=\tLa$
and consider the degree.
\end{remark}

\begin{definition}[{\bf Catenation}]\label{def:catenation}\rm
Let $x,y,z\in\alpha\cap\beta$. 
The {\bf catenation of two $(\alpha,\beta)$-traces
$\Lambda=(x,y,\w)$ and $\Lambda'=(y,z,\w')$} is defined by 
$$
\Lambda\#\Lambda' := (x,z,\w+\w').
$$
Let $u\in\cD(x,y)$ and $u'\in\cD(y,z)$
and suppose that $u$ and $u'$ are constant 
near the ends $\pm1\in\D$. 
For $0<\lambda<1$ sufficiently close to one 
the {\bf $\lambda$-catenation of $u$ and $u'$} 
is the map $u\#_\lambda u'\in\cD(x,z)$ defined by
$$
(u\#_\lambda u')(\zeta) := \left\{\begin{array}{ll}
u\left(\frac{\zeta+\lambda}{1+\lambda\zeta}\right),&
\mbox{for }\Re\,\zeta\le 0,\\
u'\left(\frac{\zeta-\lambda}{1-\lambda\zeta}\right),&
\mbox{for }\Re\,\zeta\ge 0.
\end{array}\right.
$$
\end{definition}

\begin{lemma}\label{le:catenation}
If $u\in\cD(x,y)$ and $u'\in\cD(y,z)$ are as in 
Definition~\ref{def:catenation} then
$$
\Lambda_{u\#_\lambda u'} = \Lambda_u \#\Lambda_{u'}.
$$
Thus the catenation of two $(\alpha,\beta)$-traces 
is again an $(\alpha,\beta)$-trace.
\end{lemma}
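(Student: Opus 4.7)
The plan is to verify directly that $u\#_\lambda u'$ belongs to $\cD(x,z)$ and that its degree function equals $\w_u+\w_{u'}$; once this is shown, the second assertion of the lemma is automatic, since $u\#_\lambda u'$ then realizes the trace $\Lambda_u\#\Lambda_{u'}$. Write $\phi_\lambda(\zeta):=(\zeta+\lambda)/(1+\lambda\zeta)$ and $\psi_\lambda(\zeta):=(\zeta-\lambda)/(1-\lambda\zeta)$; these are mutually inverse biholomorphic automorphisms of the closed half-disc $\D$ which individually preserve the boundary arcs $\D\cap\R$ and $\D\cap S^1$. Hence $u\circ\phi_\lambda$ on $\{\zeta\in\D:\Re\zeta\le 0\}$ and $u'\circ\psi_\lambda$ on $\{\zeta\in\D:\Re\zeta\ge 0\}$ each respect the required boundary conditions, and the identities $\phi_\lambda(-1)=-1$ and $\psi_\lambda(1)=1$ give the correct endpoints $x$ and $z$.

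For smoothness across the imaginary axis I would invoke the hypothesis that $u\equiv y$ on some neighborhood $U_+$ of $+1\in\D$ and $u'\equiv y$ on some neighborhood $U_-$ of $-1\in\D$. Since $\phi_\lambda$ maps the imaginary axis of $\D$ into a region that shrinks onto $\{+1\}$ as $\lambda\to 1$, and $\psi_\lambda$ does the same for $\{-1\}$, for $\lambda$ close enough to~$1$ both pieces are constantly equal to $y$ on an open neighborhood of the imaginary axis in $\D$, and so they glue to a smooth map $u\#_\lambda u'\in\cD(x,z)$. To identify its degree function, pick a common regular value $z^*\in\Sigma\setminus(\alpha\cup\beta)$ of $u$ and~$u'$; such points are open and dense, and since $\w$ is locally constant on $\Sigma\setminus(\alpha\cup\beta)$ it suffices to compare values at them. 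Because $y\in\alpha\cap\beta$ while $z^*\notin\alpha\cup\beta$ we have $z^*\ne y$, hence $u^{-1}(z^*)\subset\D\setminus U_+$ and $u'^{-1}(z^*)\subset\D\setminus U_-$. A short direct check (using $\psi_\lambda(\eta)\to -1$ for each fixed $\eta\ne +1$ and $\phi_\lambda(\eta')\to +1$ for each fixed $\eta'\ne -1$ as $\lambda\to 1$) shows that for $\lambda$ sufficiently close to~$1$ one has $\psi_\lambda(u^{-1}(z^*))\subset\{\Re\zeta\le 0\}$ and $\phi_\lambda(u'^{-1}(z^*))\subset\{\Re\zeta\ge 0\}$, whence
$$
(u\#_\lambda u')^{-1}(z^*)=\psi_\lambda\bigl(u^{-1}(z^*)\bigr)\sqcup\phi_\lambda\bigl(u'^{-1}(z^*)\bigr).
$$
Since $\phi_\lambda$ and $\psi_\lambda$ are orientation preserving, signed counting yields $\w_{u\#_\lambda u'}(z^*)=\w_u(z^*)+\w_{u'}(z^*)$.

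I expect the main obstacle to be the bookkeeping: one must track precisely which subregions of $\D$ are swept out by the two M\"obius reparametrizations, since neither $\phi_\lambda$ restricted to the left half nor $\psi_\lambda$ restricted to the right half is surjective onto $\D$. The decomposition of the preimage above, and hence the degree additivity, is valid only because the portions of $\D$ missed by $\phi_\lambda(\{\Re\zeta\le 0\})\cup\psi_\lambda(\{\Re\zeta\ge 0\})$ lie inside the constancy loci $U_\pm$; this is exactly what the hypothesis that $u$ and $u'$ be constant near the glued ends buys us, and it is the same hypothesis that ensures smoothness of $u\#_\lambda u'$ at the seam.
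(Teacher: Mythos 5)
Your proof is correct and spells out in detail what the paper dispenses with in a single sentence (``This follows directly from the definitions''); the key observations---that $\phi_\lambda,\psi_\lambda$ are mutually inverse orientation-preserving automorphisms of $\D$ preserving both boundary arcs, and that the constancy hypothesis near $\pm1$ both ensures smoothness across the seam and pushes the two preimage sets strictly into opposite half-discs for $\lambda$ near $1$---are exactly what the paper's terse proof presupposes. The only minor point worth flagging for the final assertion of the lemma: given \emph{arbitrary} $(\alpha,\beta)$-traces $\Lambda_u$ and $\Lambda_{u'}$, one should first replace $u$ and $u'$ by reparametrizations $u\circ\rho$, $u'\circ\rho$ (with $\rho:\D\to\D$ degree one and collapsing neighborhoods of $\pm1$) so that the resulting maps are constant near the ends; this does not change the traces and is what puts you in the situation your argument handles.
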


\begin{proof}
This follows directly from the definitions.
\end{proof}


\section{The Maslov Index}\label{sec:M}

\begin{definition}\label{def:maslov}\rm
Let $x,y\in\alpha\cap\beta$ and $u\in\cD(x,y)$.
Choose an orientation preserving trivialization 
$$
\D\times\R^2\to u^*T\Sigma:(z,\zeta)\mapsto\Phi(z)\zeta
$$
and consider the Lagrangian paths 
$$
\lambda_0,\lambda_1:[0,1]\to\RP^1
$$
given by 
\begin{equation*}
\begin{split}
\lambda_0(s)&:=\Phi(-\cos(\pi s),0)^{-1}
T_{u(-\cos(\pi s),0)}\alpha,\\
\lambda_1(s)&:=\Phi(-\cos(\pi s),\sin(\pi s))^{-1}
T_{u(-\cos(\pi s),\sin(\pi s))}\beta.
\end{split}
\end{equation*}
The {\bf Viterbo--Maslov index of $u$} is defined as the 
relative Maslov index of the pair of Lagrangian paths
$(\lambda_0,\lambda_1)$ and will be denoted by
$$
\mu(u) := \mu(\Lambda_u) := \mu(\lambda_0,\lambda_1).
$$
By the naturality and homotopy axioms for the relative Maslov index 
(see for example~\cite{RS2}), the number $\mu(u)$ 
is independent of the choice of the trivialization 
and depends only on the homotopy class of $u$;
hence it depends only on the $(\alpha,\beta)$-trace of $u$,
by Theorem~\ref{thm:trace}.   The relative Maslov index 
$\mu(\lambda_0,\lambda_1)$ is the degree of the loop in 
$\RP^1$ obtained by traversing $\lambda_0$,  followed by 
a counterclockwise turn from $\lambda_0(1)$ to $\lambda_1(1)$,
followed by traversing $\lambda_1$ in reverse time, followed
by a clockwise turn from $\lambda_1(0)$ to $\lambda_0(0)$.
This index was first defined by Viterbo~\cite{VITERBO} 
(in all dimensions).  Another exposition is contained in~\cite{RS2}.
\end{definition}

\begin{remark}\label{rmk:mascat}\rm
The Viterbo--Maslov index is additive under catenation,
i.e.\ if 
$$
\Lambda=(x,y,\w),\qquad
\Lambda'=(y,z,\w')
$$ 
are $(\alpha,\beta)$-traces then
$$
\mu(\Lambda\#\Lambda') = \mu(\Lambda)+\mu(\Lambda').
$$
For a proof of this formula see~\cite{VITERBO,RS2}.
\end{remark}

\begin{definition}\label{def:arc}\rm
Let $\Lambda=(x,y,\w)$ be an $(\alpha,\beta)$-trace
and 
$$
\nu_\alpha:=\p\w|_{\alpha\setminus\beta},\qquad
\nu_\beta:=-\p\w|_{\beta\setminus\alpha}.
$$
$\Lambda$ is said to satisfy the {\bf arc condition} if
\begin{equation}\label{eq:arc}
x\ne y,\qquad \min\Abs{\nu_\alpha} = \min\Abs{\nu_\beta}=0.
\end{equation}
When $\Lambda$ satisfies the arc condition
there are arcs $A\subset\alpha$ and $B\subset\beta$ 
from $x$ to $y$ such that
\begin{equation}\label{eq:nuAB}
\nu_\alpha(z) = \left\{\begin{array}{rl}
\pm1,&\mbox{if }z\in A,\\
0,&\mbox{if }z\in\alpha\setminus\overline A,
\end{array}\right.\;\;
\nu_\beta(z) = \left\{\begin{array}{rl}
\pm1,&\mbox{if }z\in B,\\
0,&\mbox{if }z\in\beta\setminus\overline B.
\end{array}\right.
\end{equation}
Here the plus sign is chosen iff the orientation of $A$ from 
$x$ to $y$ agrees with that of $\alpha$, respectively the 
orientation of $B$ from $x$ to $y$ agrees with that of~$\beta$.
In this situation the quadruple $(x,y,A,B)$ and the triple
$(x,y,\p\w)$ determine one another and we also write
$$
\p\Lambda = (x,y,A,B)
$$ 
for the boundary of $\Lambda$.
When $u\in\cD$ and $\Lambda_u=(x,y,\w)$ 
satisfies the arc condition and $\p\Lambda_u=(x,y,A,B)$
then 
$$
s\mapsto u(-\cos(\pi s),0)
$$
is homotopic in $\alpha$ 
to a path traversing $A$ and the path 
$$
s\mapsto u(-\cos(\pi s),\sin(\pi s))
$$
is homotopic in~$\beta$ to a path traversing $B$.
\end{definition}

\begin{theorem}\label{thm:maslov}
Let $\Lambda=(x,y,\w)$
be an $(\alpha,\beta)$-trace.  For $z\in\alpha\cap\beta$
denote by $m_z(\Lambda)$ the sum of the four 
values of $\w$ encountered when walking along 
a small circle surrounding~$z$.
Then the Viterbo--Maslov index of~$\Lambda$ is given by 
\begin{equation}\label{eq:maslov}
\mu(\Lambda) = \frac{m_x(\Lambda)+m_y(\Lambda)}{2}.
\end{equation}
\end{theorem}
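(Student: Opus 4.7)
The plan is to reduce the formula~\eqref{eq:maslov} to a base case of elementary ``lune'' traces, exploiting the additivity of both sides under catenation.

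First, I would verify that the formula is compatible with catenation. Given $\Lambda_1 = (x,y,\w_1)$, $\Lambda_2 = (y,z,\w_2)$, and their catenation $\Lambda = \Lambda_1\#\Lambda_2 = (x,z,\w_1+\w_2)$, the additivity $\mu(\Lambda) = \mu(\Lambda_1) + \mu(\Lambda_2)$ from Remark~\ref{rmk:mascat} combined with the $\Z$-linearity $m_v(\w_1+\w_2) = m_v(\w_1) + m_v(\w_2)$ reduces the simultaneous validity of~\eqref{eq:maslov} for all three traces to the combinatorial identity
\[
m_y(\w_1) - m_z(\w_1) \;=\; m_x(\w_2) - m_y(\w_2).
\]
I would verify this identity by a local calculation using the quadrant-value description of $\w$ around each intersection point from Remark~\ref{rmk:trace}(i): at a non-endpoint intersection point $v$ of $\w_i$ the four quadrant values of $\w_i$ sum to $4k_v + 2(\nu_\alpha(v) - \nu_\beta(v))$ for a local constant $k_v$, while at an endpoint the analogous expression must account for the $\pm 1$ jumps of $\nu_\alpha$ and $\nu_\beta$ across the endpoint.

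Second, I would show that every $(\alpha,\beta)$-trace decomposes as a catenation of traces satisfying the arc condition of Definition~\ref{def:arc}. By Theorem~\ref{thm:trace}, such a decomposition can be produced at the level of the two-chain $\w$: locate a two-cell $F$ on which $|\w|$ attains its maximum, find embedded arcs $A \subset \alpha$ and $B \subset \beta$ cobounding an elementary lune containing $F$, subtract the corresponding signed lune trace from $\Lambda$ via catenation, and iterate on the strictly smaller residual. When $\Sigma$ is diffeomorphic to $S^2$ a preliminary step is needed to absorb a global shift in $\w$ into a sphere bubble, analogous to the construction at the end of the proof of Theorem~\ref{thm:trace}.

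For the base case of a single lune with arcs $A, B$ from $x$ to $y$, the two-chain $\w$ equals $\pm 1$ on the lune region and vanishes outside, so at each endpoint three of the four quadrant values are zero and the fourth is $\pm 1$; hence $m_x = m_y = \pm 1$ and $\tfrac12(m_x+m_y) = \pm 1$. A direct computation of the relative Maslov index in a standard lune model via the degree-of-loop-in-$\RP^1$ description from Definition~\ref{def:maslov} yields $\mu = \pm 1$ with matching sign. The main obstacle is the lune decomposition in the second step: producing an elementary lune to peel off from a general trace is a delicate combinatorial problem, particularly when $\w$ changes sign across its support or when the support has nontrivial topology, and is likely the technical heart of the paper.
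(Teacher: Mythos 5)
Your strategy---reduce by catenation to a base case of elementary lunes---is superficially related to the paper's proof but differs in essential ways, and has two genuine gaps, one of which you acknowledge and one of which you do not.

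The gap you acknowledge (existence of a lune to peel off) is real, but the more fundamental problem is the first step. You claim the catenation compatibility reduces to the identity $m_y(\w_1) - m_z(\w_1) = m_x(\w_2) - m_y(\w_2)$ and that you would verify it ``by a local calculation using the quadrant-value description.'' This identity is \emph{not} local. By Remark~\ref{rmk:trace}(i) the quadrant sum at a point $v$ has the form $4k_v + 2\nu_\alpha(v) - 2\nu_\beta(v)$, but the integer $k_v$ is a value of the two-chain $\w_i$ in a particular quadrant, and the differences $k_y^{(1)} - k_z^{(1)}$ and $k_x^{(2)} - k_y^{(2)}$ are computed by integrating the jumps of $\w_1$ and $\w_2$ along paths between the respective points. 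These are governed by the one-chains $\p\w_1$ and $\p\w_2$, which are different, and there is no purely local reason the two sides should match. (The identity \emph{is} true, but only because it is a consequence of the theorem itself.) So your first step, as written, is circular: you would be assuming an identity that is essentially equivalent in difficulty to the result you are trying to prove. In the paper, no such abstract compatibility is invoked; instead the relationship between $\mu$, $k_x$, $k_y$, $\eps_x$, $\eps_y$ for the original trace and the reduced one is tracked explicitly in Steps~4--6 of Proposition~\ref{prop:maslovC}, which makes the bookkeeping finite and verifiable.

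A third, unacknowledged gap is the non-simply-connected case. Your proposal discusses only $\C$ and a modification for $S^2$; you say nothing about surfaces of positive genus or the annulus. There the lune decomposition genuinely fails (e.g.\ traces like the one in Figure~\ref{fig:annulus3} with $x=y$ that wrap around a noncontractible loop cannot be peeled into elementary lunes), and a completely different device is needed. The paper devotes all of Section~\ref{sec:LIFT} to this: it lifts $\Lambda$ to the universal cover and proves the cancellation identity $m_{g\tx}(\tLa) + m_{g^{-1}\ty}(\tLa) = 0$ for every nontrivial deck transformation $g$ (Proposition~\ref{prop:gm}), after which the planar case applied to the lift gives~\eqref{eq:maslov}. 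This is the technical heart of the argument, and your proposal gives no route to it. Finally, even in the planar case your approach is harder than necessary: the paper first normalizes to $\alpha = \R$ and $A = [x,y]$ (Step~1), after which the next crossing of $B$ with $\R$ supplies the arc to peel off for free---the delicate lune-finding combinatorics you worry about is exactly what the normalization eliminates.
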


We first prove the result for the $2$-plane and the $2$-sphere
(Section~\ref{sec:PLANE}).  When $\Sigma$ is not 
simply connected we reduce the result to the case 
of the $2$-plane (Section~\ref{sec:LIFT}).  
The key is the identity
\begin{equation}\label{eq:mtilde}
m_{g\tx}(\tLa) + m_{g^{-1}\ty}(\tLa)=0
\end{equation}
for every lift $\tLa$ to the universal cover and every deck
transformation $g\ne\id$.  


\section{The Simply Connected Case}\label{sec:PLANE}

A connected oriented $2$-manifold $\Sigma$
is called {\bf planar} if it admits an (orientation preserving)
embedding into the complex plane. 

\begin{proposition}\label{prop:maslovC}
Equation~\eqref{eq:maslov} holds when $\Sigma$ is planar.
\end{proposition}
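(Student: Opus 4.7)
The plan is to embed $\Sigma$ in $\C$, use the resulting trivialization of $T\Sigma$ to compute $\mu(\Lambda)$ as an explicit winding number, and then establish~\eqref{eq:maslov} by combining catenation with induction on the area of $\w$.

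First, since $\Sigma$ is planar, choose an orientation-preserving embedding $\Sigma\hookrightarrow\C$. This provides a canonical trivialization $\Phi$ of $T\Sigma$, and in this trivialization the Lagrangian paths $\lambda_0,\lambda_1\colon[0,1]\to\RP^1$ of Definition~\ref{def:maslov} are literally the tangent lines to $\alpha$ and $\beta$ along the two halves of $\p\D$. Hence $\mu(\Lambda)$ becomes the winding number in $\RP^1$ of an explicit loop formed by $\lambda_0$, a counterclockwise turn at $y$, the reverse of $\lambda_1$, and a clockwise turn at $x$.

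Next, reduce to the case where $\Lambda$ satisfies the arc condition of Definition~\ref{def:arc}. By Lemma~\ref{le:catenation} and Remark~\ref{rmk:mascat}, both the trace and the Viterbo--Maslov index are additive under catenation. By catenating with a suitable ``strip'' trace that shifts $\nu_\alpha$ or $\nu_\beta$ by a constant, one can reach the arc condition; the compensating change on the right-hand side of~\eqref{eq:maslov} can be computed directly from the local modification of $\w$ near $x$ and $y$.

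For arc-condition traces with boundary $\p\Lambda=(x,y,A,B)$, induct on the total area $\|\w\|_1:=\sum_F|\w(F)|$. The base case is a single lune: the support of $\w$ is one $2$-cell bounded by $A$ and $B$ with $\w=\pm1$ there, so $m_x=m_y=\pm1$, and a direct computation of the winding of the tangent lines along the two arcs gives $\mu=\pm1$, matching the formula. For the inductive step, identify an intersection point $p\in\alpha\cap\beta$ lying in $\INT(A)\cup\INT(B)$ at the topological boundary of the support of $\w$, split $\Lambda$ at $p$ as a catenation $\Lambda=\Lambda_1\#\Lambda_2$ of two arc-condition traces of strictly smaller area, apply the inductive hypothesis to each piece, and verify the telescoping identity
\[
  m_x(\Lambda)+m_y(\Lambda)=\bigl(m_x(\Lambda_1)+m_p(\Lambda_1)\bigr)+\bigl(m_p(\Lambda_2)+m_y(\Lambda_2)\bigr)
\]
by a local calculation at $p$, using that $\w_1$ and $\w_2$ have essentially disjoint supports outside the four cells meeting at $p$.

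The hard part will be the inductive step: choosing $p$ so that both $\Lambda_1$ and $\Lambda_2$ remain valid arc-condition $(\alpha,\beta)$-traces with the correct boundary arcs, and verifying the telescoping of $m_p$. Planarity is used essentially here: it guarantees that an outermost lune (a $2$-cell in the support of $\w$ adjacent to $A$ or $B$ from outside the rest of the support) always exists and can be peeled off cleanly, so the induction actually terminates.
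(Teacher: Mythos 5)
Your high-level strategy — reduce to the arc condition and planar normal form, then induct using catenation additivity of both sides of~\eqref{eq:maslov} — is the same as the paper's, but the core inductive argument is genuinely different. The paper (Steps~1--7) normalizes to $\alpha=\R$, $A=[x,y]$, and inducts on $\#(B\cap\R)$; in each step it does \emph{not} decompose $\Lambda$ as a catenation, but instead replaces $\Lambda$ by a new trace $\Lambda'$ with a \emph{different endpoint} (the next intersection of $B$ with $\R$), and relates $\mu(\Lambda)$ to $\mu(\Lambda')$ by $\pm1$ or $1-\mu(\Lambda')$, carefully tracking $k_x,k_y,\eps_x,\eps_y$ via explicit sign bookkeeping. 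Your proposal instead inducts on $\|\w\|_1$ and splits $\Lambda=\Lambda_1\#\Lambda_2$ at an interior crossing $p\in A\cap B$. That is a structurally different induction.

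The gaps in your version are real and sit exactly where you flagged them, but they are deeper than the phrasing suggests. The telescoping identity you need is $m_p(\Lambda)=m_x(\Lambda_2)+m_y(\Lambda_1)$ (after cancelling the additive terms $m_x(\Lambda)=m_x(\Lambda_1)+m_x(\Lambda_2)$, etc.). This is \emph{not} a consequence of linearity of $m_z$ in $\w$, and it is false for a generic crossing $p$: nothing forces $m_y(\Lambda_1)=0$ or $m_x(\Lambda_2)=0$, because even if $\w_1$ is a single lune between $A_1$ and $B_1$, the winding number $\w_2$ of $A_2-B_2$ can be nonzero \emph{inside} that lune ($B_2$ may re-enter it), so the supports are not disjoint. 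Likewise, the strict decrease $\|\w_i\|_1<\|\w\|_1$ fails if the two pieces overlap with cancellation. Both facts require choosing $p$ to be a genuinely ``outermost'' crossing in a sense that must be made precise and shown to exist; this is the same order of combinatorial work that the paper does explicitly via its Steps~4--6, and it cannot be relegated to ``a local calculation at $p$.'' Also note that if $p\in\INT(A)\cup\INT(B)$ but $p\notin A\cap B$, the split $\Lambda=\Lambda_1\#\Lambda_2$ with $\w=\w_1+\w_2$ is not a clean cut along $A$ and $B$ — you would have to introduce auxiliary arcs — so restrict to $p\in A\cap B$ and argue existence of a usable such $p$. Finally, the ``catenate with a strip to reach the arc condition'' step requires knowing~\eqref{eq:maslov} for the $x=y$ traces you catenate with; the paper proves this separately (Step~8, four cases), and your ``compensating change'' computation amounts to the same thing, so it should be spelled out rather than asserted.
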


\begin{proof}
Assume first that $\Sigma=\C$ and 
$
\Lambda=(x,y,\w)
$ 
satisfies the arc condition.  Thus
the boundary of $\Lambda$ has the form
$$
\p\Lambda = (x,y,A,B),
$$
where $A\subset\alpha$ and $B\subset\beta$ are arcs from 
$x$ to $y$ and $\w(z)$ is the winding number of the loop $A-B$ 
about the point $z\in\Sigma\setminus(A\cup B)$
(see Remark~\ref{rmk:trace}). Hence the formula~\eqref{eq:maslov} 
can be written in the form
\begin{equation}\label{eq:masLov}
\mu(\Lambda) = 2k_x + 2k_y + \frac{\eps_x-\eps_y}{2}.
\end{equation}
Here 
$
\eps_z=\eps_z(\Lambda)\in\{+1,-1\}
$ 
denotes the intersection index of $A$ and $B$ 
at a point $z\in A\cap B$, $k_x=k_x(\Lambda)$ 
denotes the value of the winding number $\w$ 
at a point in $\alpha\setminus A$ close to $x$,
and $k_y=k_y(\Lambda)$ denotes the value of $\w$ 
at a point in $\alpha\setminus A$ close to $y$.
We now prove~\eqref{eq:masLov} under the assumption
that $\Lambda$ satisfies the arc condition.
The proof is by induction on the number of intersection 
points of $B$ and~$\alpha$ and has seven steps.

\medskip\noindent{\bf Step~1.}
{\it We may assume without loss of generality that
\begin{equation}\label{eq:AB}
\Sigma=\C,\qquad \alpha=\R,\qquad A=[x,y],\qquad x<y,
\end{equation}
and $B\subset\C$ is an embedded arc from $x$ to $y$ 
that is transverse to $\R$.}

\medskip\noindent
Choose a diffeomorphism from $\Sigma$ 
to $\C$ that maps $A$ to a bounded closed interval 
and maps $x$ to the left endpoint of $A$.
If $\alpha$ is not compact the diffeomorphism can be chosen
such that it also maps $\alpha$ to $\R$.
If $\alpha$ is an embedded circle the diffeomorphism 
can be chosen such that its restriction to $B$ is 
transverse to $\R$; now replace the image of~$\alpha$ by~$\R$. 
This proves Step~1.

\medskip\noindent{\bf Step~2.}
{\it Assume~\eqref{eq:AB} and let 
$\bar\Lambda:=(x,y,z\mapsto -\w(\bar z))$ 
be the $(\alpha,\bar\beta)$-trace 
obtained from $\Lambda$ by complex conjugation.
Then $\Lambda$ satisfies~\eqref{eq:masLov} if and only if 
$\bar\Lambda$ satisfies~\eqref{eq:masLov}.}

\medskip\noindent
Step~2 follows from the fact that the numbers
$\mu,k_x,k_y,\eps_x,\eps_y$ change sign 
under complex conjugation.

\medskip\noindent{\bf Step~3.}
{\it  Assume~\eqref{eq:AB}.  If $B\cap\R=\{x,y\}$ then $\Lambda$ 
satisfies~\eqref{eq:masLov}.}

\medskip\noindent
In this case $B$ is contained in the upper 
or lower closed half plane and the loop $A\cup B$ 
bounds a disc contained in the same half plane.
By Step~1 we may assume that $B$ is contained in the 
upper half space. Then $\eps_x=1$, $\eps_y=-1$,
and $\mu(\Lambda)=1$.  Moreover, the winding number 
$\w$ is one in the disc encircled by $A$ 
and $B$ and is zero in the complement of its closure.  
Since the intervals $(-\infty,0)$ and $(0,\infty)$ 
are contained in this complement, we have $k_x=k_y=0$.
This proves Step~3.

\medskip\noindent{\bf Step~4.}
{\it Assume~\eqref{eq:AB} and $\#(B\cap\R)>2$, 
follow the arc of $B$, starting at $x$, 
and let~$x'$ be the next intersection point with $\R$. 
Assume $x'<x$, denote by $B'$ the arc in $B$ from $x'$ to $y$, 
and let $A':=[x',y]$ (see Figure~\ref{fig:maslov4}).
If the $(\alpha,\beta)$-trace 
$\Lambda'$ with boundary $\p\Lambda'=(x',y,A',B')$ 
satisfies~\eqref{eq:masLov} so does $\Lambda$.}

\begin{figure}[htp] 
\centering 
\includegraphics[scale=0.4]{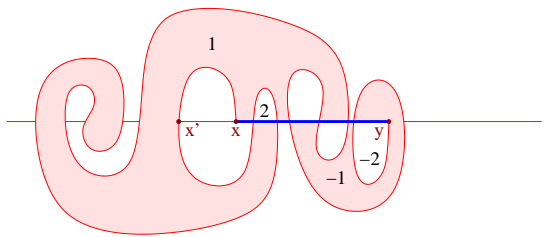}      
\caption{{Maslov index and catenation: $x'<x<y$.}}\label{fig:maslov4}      
\end{figure}

\medskip\noindent
By Step~2 we may assume $\eps_x(\Lambda)=1$.
Orient $B$ from $x$ to $y$.
The Viterbo--Maslov index of $\Lambda$ is 
minus the Maslov index of the path 
$
B\to\RP^1:z\mapsto T_zB, 
$
relative to the Lagrangian subspace $\R\subset\C$.
Since the Maslov index of the arc in $B$ from 
$x$ to $x'$ is $+1$ we have
\begin{equation}\label{eq:case11}
\mu(\Lambda) = \mu(\Lambda')-1.
\end{equation}
Since the orientations of $A'$ and $B'$ 
agree with those of $A$ and $B$ we have
\begin{equation}\label{eq:case12}
\eps_{x'}(\Lambda') = \eps_{x'}(\Lambda)=-1,\qquad
\eps_y(\Lambda')=\eps_y(\Lambda).
\end{equation}
Now let $x_1<x_2<\cdots<x_m<x$ be the intersection
points of $\R$ and $B$ in the interval $(-\infty,x)$
and let $\eps_i\in\{-1,+1\}$ be the intersection
index of $\R$ and $B$ at $x_i$. 
Then there is an integer $\ell\in\{1,\dots,m\}$ 
such that $x_\ell=x'$ and $\eps_\ell=-1$. 
Moreover, the winding number $\w$
slightly to the left of $x$ is 
$$
k_x(\Lambda) = \sum_{i=1}^m\eps_i.
$$
It agrees with the value of $\w$ 
slightly to the right of $x'=x_\ell$.  Hence 
\begin{equation}\label{eq:case13}
k_x(\Lambda) 
= \sum_{i=1}^\ell \eps_i 
= \sum_{i=1}^{\ell-1}\eps_i - 1 
= k_{x'}(\Lambda') - 1,\qquad
k_y(\Lambda')=k_y(\Lambda).
\end{equation}
It follows from equation~\eqref{eq:masLov} for $\Lambda'$
and equations~\eqref{eq:case11}, \eqref{eq:case12}, 
and~\eqref{eq:case13} that
\begin{eqnarray*}
\mu(\Lambda) 
&=& 
\mu(\Lambda')-1 \\
&=& 
2k_{x'}(\Lambda')+2k_y(\Lambda')
+ \frac{\eps_{x'}(\Lambda')-\eps_y(\Lambda')}{2}
- 1 \\
&=&
2k_{x'}(\Lambda')+2k_y(\Lambda')
+ \frac{-1-\eps_y(\Lambda)}{2}
- 1 \\
&=&
2k_{x'}(\Lambda')+2k_y(\Lambda')
+ \frac{1-\eps_y(\Lambda)}{2}
- 2  \\
&=&
2k_x(\Lambda)+2k_y(\Lambda)
+ \frac{\eps_x(\Lambda)-\eps_y(\Lambda)}{2}.
\end{eqnarray*}
This proves Step~4.

\medskip\noindent{\bf Step~5.}
{\it Assume~\eqref{eq:AB} and $\#(B\cap\R)>2$, 
follow the arc of $B$, starting at $x$, 
and let~$x'$ be the next intersection 
point with $\R$. Assume $x<x'<y$, 
denote by $B'$ the arc in $B$ from $x'$ to $y$, 
and let $A':=[x',y]$ (see Figure~\ref{fig:maslov5}).  
If the $(\alpha,\beta)$-trace
$\Lambda'$ with boundary $\p\Lambda'=(x',y,A',B')$ 
satisfies~\eqref{eq:masLov} so does $\Lambda$.}
\begin{figure}[htp] 
\centering 
\includegraphics[scale=0.4]{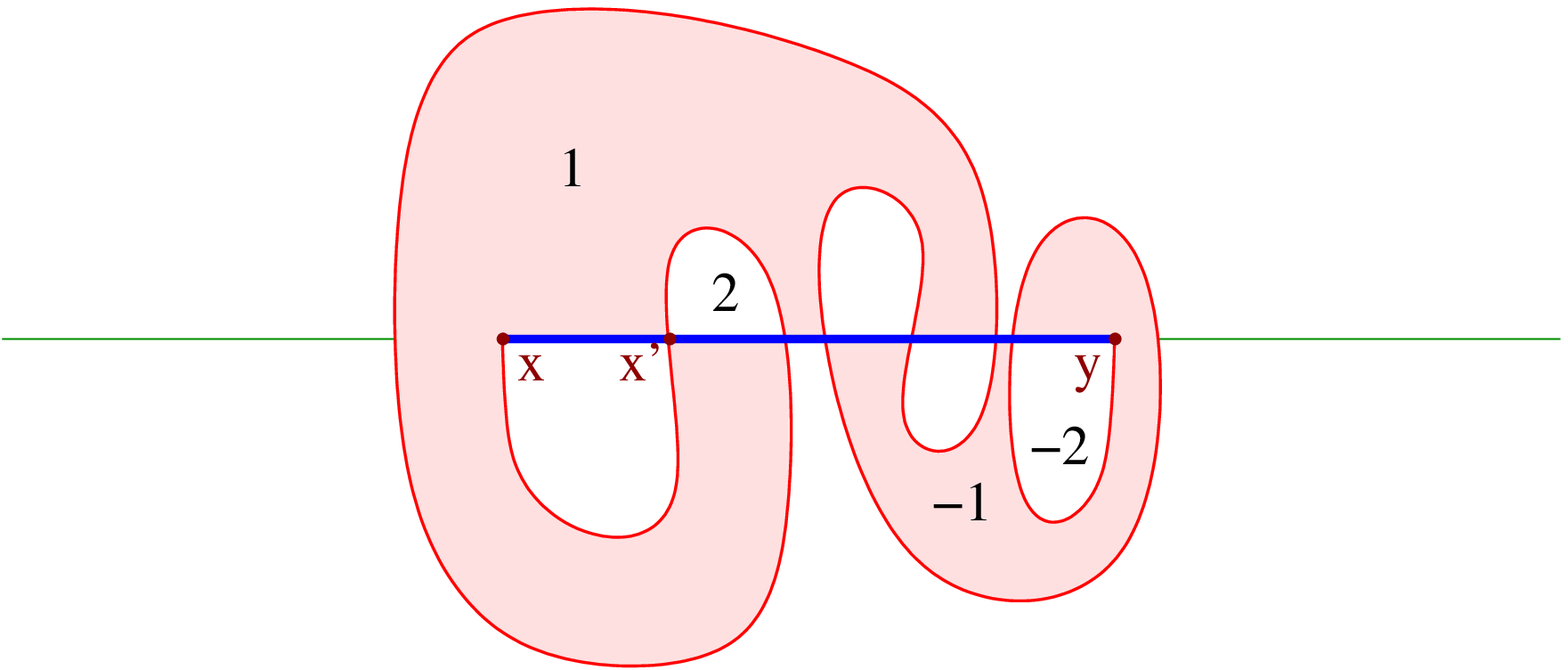} 
\caption{{Maslov index and catenation: $x<x'<y$.}}
\label{fig:maslov5}      
\end{figure}

\medskip\noindent
By Step~2 we may assume $\eps_x(\Lambda)=1$.
Since the Maslov index of the arc in $B$ 
from $x$ to $x'$ is $-1$, we have 
\begin{equation}\label{eq:case21}
\mu(\Lambda) = \mu(\Lambda')+1.
\end{equation}
Since the orientations of $A'$ and $B'$ 
agree with those of $A$ and $B$ we have
\begin{equation}\label{eq:case22}
\eps_{x'}(\Lambda') = \eps_{x'}(\Lambda)=-1,\qquad
\eps_y(\Lambda')=\eps_y(\Lambda).
\end{equation}
Now let $x<x_1<x_2<\cdots<x_m<x'$ be the intersection
points of $\R$ and $B$ in the interval $(x,x')$
and let $\eps_i\in\{-1,+1\}$ be the intersection
index of $\R$ and $B$ at $x_i$.  
Since the value of $\w$ slightly to the left of $x'$ 
agrees with the value of $\w$ slightly to the right of $x$ 
we have 
$$
\sum_{i=1}^m\eps_i=0. 
$$
Since $k_{x'}(\Lambda')$ is the sum of the intersection
indices of $\R$ and $B'$ at all points to the left of $x'$
we obtain
\begin{equation}\label{eq:case23}
k_{x'}(\Lambda') 
= k_x(\Lambda) + \sum_{i=1}^m\eps_i
= k_x(\Lambda),\qquad
k_y(\Lambda')=k_y(\Lambda).
\end{equation}
It follows from equation~\eqref{eq:masLov} for $\Lambda'$
and equations~\eqref{eq:case21}, \eqref{eq:case22}, 
and~\eqref{eq:case23} that
\begin{eqnarray*}
\mu(\Lambda) 
&=& 
\mu(\Lambda')+1 \\
&=& 
2k_{x'}(\Lambda')+2k_y(\Lambda')
+ \frac{\eps_{x'}(\Lambda')-\eps_y(\Lambda')}{2}
+ 1 \\
&=&
2k_x(\Lambda)+2k_y(\Lambda)
+ \frac{-1-\eps_y(\Lambda)}{2}
+ 1 \\
&=&
2k_x(\Lambda)+2k_y(\Lambda)
+ \frac{\eps_x(\Lambda)-\eps_y(\Lambda)}{2}.
\end{eqnarray*}
This proves Step~5.

\medskip\noindent{\bf Step~6.}
{\it Assume~\eqref{eq:AB} and $\#(B\cap\R)>2$, 
follow the arc of $B$, starting at $x$, 
and let~$y'$ be the next intersection point with $\R$. 
Assume $y'>y$. Denote by $B'$ the arc in $B$ from $y$ to $y'$, 
and let $A':=[y,y']$ (see Figure~\ref{fig:maslov6}).
If the $(\alpha,\beta)$-trace 
$\Lambda'$ with boundary $\p\Lambda'=(y,y',A',B')$
satisfies~\eqref{eq:masLov} so does $\Lambda$.}
\begin{figure}[htp] 
\centering 
\includegraphics[scale=0.4]{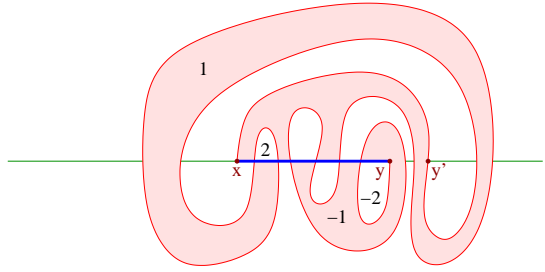} 
\caption{{Maslov index and catenation: $x<y<y'$.}}
\label{fig:maslov6}      
\end{figure}

\medskip\noindent
By Step~2 we may assume $\eps_x(\Lambda)=1$.
Since the orientation of $B'$ from $y$ to $y'$ is opposite
to the orientation of $B$ and the Maslov index 
of the arc in $B$ from $x$ to $y'$ is $-1$, we have 
\begin{equation}\label{eq:case31}
\mu(\Lambda) = 1-\mu(\Lambda').
\end{equation}
Using again the fact that the orientation of $B'$ is opposite
to the orientation of $B$ we have
\begin{equation}\label{eq:case32}
\eps_y(\Lambda') = -\eps_y(\Lambda),\qquad
\eps_{y'}(\Lambda')=-\eps_{y'}(\Lambda) = 1.
\end{equation}
Now let $x_1<x_2<\cdots<x_m$ be all 
intersection points of $\R$ and $B$ 
and let $\eps_i\in\{-1,+1\}$ be 
the intersection index of $\R$ and $B$ at $x_i$.  
Choose 
$$
j<k<\ell
$$ 
such that
$$
x_j=x,\qquad x_k=y,\qquad x_\ell=y'.
$$
Then 
$$
\eps_j=\eps_x(\Lambda)=1,\qquad
\eps_k=\eps_y(\Lambda),\qquad
\eps_\ell=\eps_{y'}(\Lambda)=-1, 
$$
and 
$$
k_x(\Lambda) = \sum_{i<j}\eps_i,\qquad
k_y(\Lambda) = - \sum_{i>k}\eps_i.
$$
For $i\ne j$ the intersection index
of $\R$ and $B'$ at $x_i$ is $-\eps_i$.
Moreover, $k_y(\Lambda')$ is the sum of the 
intersection indices of $\R$ and $B'$ 
at all points to the left of $y$ 
and $k_{y'}(\Lambda')$ is minus the sum of the 
intersection indices of $\R$ and $B'$ 
at all points to the right of $y'$. Hence
$$
k_y(\Lambda') 
= - \sum_{i<j}\eps_i
  - \sum_{j<i<k}\eps_i,\qquad
k_{y'}(\Lambda') 
= \sum_{i>\ell}\eps_i.
$$
We claim that 
\begin{equation}\label{eq:case33}
k_{y'}(\Lambda') + k_x(\Lambda) = 0,\qquad
k_y(\Lambda') + k_y(\Lambda) = \frac{1+\eps_y(\Lambda)}{2}.
\end{equation}
To see this, note that the value of the winding number 
$\w$ slightly to the left of $x$ agrees with the value
of $\w$ slightly to the right of $y'$, and hence
$$
0 = \sum_{i<j}\eps_i + \sum_{i>\ell}\eps_i 
= k_x(\Lambda) + k_{y'}(\Lambda').
$$
This proves the first equation in~\eqref{eq:case33}. 
To prove the second equation in~\eqref{eq:case33} 
we observe that
$$
\sum_{i=1}^m\eps_i = \frac{\eps_x(\Lambda)+\eps_y(\Lambda)}{2}
$$
and hence
\begin{eqnarray*}
k_y(\Lambda') + k_y(\Lambda)
&=&
- \sum_{i<j}\eps_i 
- \sum_{j<i<k}\eps_i
- \sum_{i>k}\eps_i  \\
&=&
\eps_j + \eps_k - \sum_{i=1}^m\eps_i \\
&=&
\eps_x(\Lambda) + \eps_y(\Lambda) - \sum_{i=1}^m\eps_i \\
&=& 
\frac{\eps_x(\Lambda)+\eps_y(\Lambda)}{2} \\
&=& 
\frac{1+\eps_y(\Lambda)}{2}.
\end{eqnarray*}
This proves the second equation in~\eqref{eq:case33}.

It follows from equation~\eqref{eq:masLov} for $\Lambda'$
and equations~\eqref{eq:case31}, \eqref{eq:case32}, 
and~\eqref{eq:case33} that
\begin{eqnarray*}
\mu(\Lambda) 
&=& 
1-\mu(\Lambda') \\
&=& 
1 - 2k_y(\Lambda')-2k_{y'}(\Lambda')
- \frac{\eps_y(\Lambda')-\eps_{y'}(\Lambda')}{2} \\
&=& 
1 - 2k_y(\Lambda')-2k_{y'}(\Lambda')
- \frac{-\eps_y(\Lambda)-1}{2} \\
&=& 
2k_y(\Lambda) - \eps_y(\Lambda) + 2k_x(\Lambda)
+ \frac{1+\eps_y(\Lambda)}{2} \\
&=&
2k_x(\Lambda)+2k_y(\Lambda) 
+ \frac{1-\eps_y(\Lambda)}{2}.
\end{eqnarray*}
Here the first equality follows from~\eqref{eq:case31}, 
the second equality follows from~\eqref{eq:masLov} for $\Lambda'$,
the third equality follows from~\eqref{eq:case32}, 
and the fourth equality follows from~\eqref{eq:case33}.
This proves Step~6.

\medskip\noindent{\bf Step~7.}
{\it Equation~\eqref{eq:maslov} holds when $\Sigma=\C$
and $\Lambda$ satisfies the arc condition.}

\medskip\noindent 
It follows from Steps~3-6 by induction that equation~\eqref{eq:masLov}
holds for every $(\alpha,\beta)$-trace $\Lambda=(x,y,\w)$
whose boundary $\p\Lambda=(x,y,A,B)$
satisfies~\eqref{eq:AB}.  Hence Step~7 follows from Step~1.

\medskip
Next we drop the assumption that $\Lambda$ satisfies the 
arc condition and extend the result to planar surfaces.  
This requires a further three steps.

\medskip\noindent{\bf Step~8.}
{\it Equation~\eqref{eq:maslov} holds when $\Sigma=\C$ and $x=y$.}

\medskip\noindent
Under these assumptions $\nu_\alpha:=\p\w|_{\alpha\setminus\beta}$ 
and $\nu_\beta:=-\p\w|_{\beta\setminus\alpha}$ are constant.
There are four cases.

\smallskip\noindent{\bf Case~1.} 
{\it $\alpha$ is an embedded circle
and $\beta$ is not an embedded circle.} 
In this case we have $\nu_\beta\equiv0$ and $B=\{x\}$.
Moroeover, $\alpha$ is the boundary of a unique disc $\Delta_\alpha$
and we assume that $\alpha$ is oriented as the boundary
of $\Delta_\alpha$. Then the path $\gamma_\alpha:[0,1]\to\Sigma$
in Definition~\ref{def:trace} satisfies 
$\gamma_\alpha(0)=\gamma_\alpha(1)=x$ and 
is homotopic to $\nu_\alpha\alpha$. Hence
$$
m_x(\Lambda) = m_y(\Lambda) = 2\nu_\alpha
= \mu(\Lambda).
$$
Here the last equation follows from the fact 
that $\Lambda$ can be obtained as the catenation 
of $\nu_\alpha$ copies of the disc $\Delta_\alpha$.

\smallskip\noindent{\bf Case~2.} 
{\it $\alpha$ is not an embedded circle
and $\beta$ is an embedded circle.} 
This follows from Case~1 by interchanging $\alpha$ and $\beta$.

\smallskip\noindent{\bf Case~3.} 
{\it $\alpha$ and $\beta$ are embedded circles.} 
In this case there is a unique pair of embedded 
discs $\Delta_\alpha$ and $\Delta_\beta$ with boundaries 
$\alpha$ and $\beta$, respectively. Orient $\alpha$
and $\beta$ as the boundaries of these discs. 
Then, for every $z\in\Sigma\setminus\alpha\cup\beta$, 
we have
$$
\w(z)=\left\{\begin{array}{ll}
\nu_\alpha-\nu_\beta,&\mbox{for } 
z\in\Delta_\alpha\cap\Delta_\beta,\\
\nu_\alpha,&\mbox{for } 
z\in\Delta_\alpha\setminus\overline{\Delta}_\beta,\\
-\nu_\beta,&
\mbox{for } z\in\Delta_\beta\setminus\overline{\Delta}_\alpha,\\
0,& \mbox{for } 
z\in\Sigma\setminus\overline{\Delta}_\alpha\cup\overline{\Delta}_\beta.
\end{array}\right.
$$
Hence
$$
m_x(\Lambda) = m_y(\Lambda) 
= 2\nu_\alpha-2\nu_\beta = \mu(\Lambda).
$$
Here the last equation follows from the fact 
$\Lambda$ can be obtained as the catenation 
of $\nu_\alpha$ copies of the disc $\Delta_\alpha$ 
(with the orientation inherited from $\Sigma$)
and $\nu_\beta$ copies of $-\Delta_\beta$
(with the opposite orientation).

\smallskip\noindent{\bf Case~4.} 
{\it Neither $\alpha$ nor $\beta$ is an embedded circle.} 
Under this assumption we have $\nu_\alpha=\nu_\beta=0$.
Hence it follows from Theorem~\ref{thm:trace}
that $\w=0$ and $\Lambda=\Lambda_u$ for the constant 
map $u\equiv x\in\cD(x,x)$.  Thus
$$
m_x(\Lambda)=m_y(\Lambda)=\mu(\Lambda)=0.
$$
This proves Step~8.

\medskip\noindent{\bf Step~9.}
{\it Equation~\eqref{eq:maslov} holds when $\Sigma=\C$.}

\medskip\noindent
By Step~8, it suffices to assume $x\ne y$.
It follows from Theorem~\ref{thm:trace} that every 
$u\in\cD(x,y)$ is homotopic to a catentation $u = u_0\#v$,
where $u_0\in\cD(x,y)$ satisfies the arc condition 
and $v\in\cD(y,y)$.  Hence it follows from Steps~7 and~8 
that
\begin{eqnarray*}
\mu(\Lambda_u)
&=&
\mu(\Lambda_{u_0}) + \mu(\Lambda_v) \\
&=&
\frac{m_x(\Lambda_{u_0})+m_y(\Lambda_{u_0})}{2} + m_y(\Lambda_v) \\
&=&
\frac{m_x(\Lambda_u)+m_y(\Lambda_u)}{2}.
\end{eqnarray*}
Here the last equation follows from the fact that 
$
\w_u=\w_{u_0}+\w_v
$ 
and hence 
$
m_z(\Lambda_u)=m_z(\Lambda_{u_0})+m_z(\Lambda_v)
$
for every $z\in\alpha\cap\beta$. This proves Step~9.  

\medskip\noindent{\bf Step~10.}
{\it Equation~\eqref{eq:maslov} holds when $\Sigma$ is planar.}

\medskip\noindent
Choose an element $u\in\cD(x,y)$ such that $\Lambda_u=\Lambda$.
Modifying $\alpha$ and $\beta$ on the complement of $u(\D)$,
if necessary, we may assume without loss of generality that
$\alpha$ and $\beta$ are mebedded circles.
Let $\iota:\Sigma\to\C$ be an orientation preserving embedding.
Then $\iota_*\Lambda := \Lambda_{\iota\circ u}$ is an 
$(\iota(\alpha),\iota(\beta))$-trace in $\C$ and hence 
satisfies~\eqref{eq:maslov} by Step~9.  Since 
$m_{\iota(x)}(\iota_*\Lambda)=m_x(\Lambda)$, 
$m_{\iota(y)}(\iota_*\Lambda)=m_y(\Lambda)$,
and $\mu(\iota_*\Lambda)=\mu(\Lambda)$ it follows 
that $\Lambda$ also satisfies~\eqref{eq:maslov}. 
This proves Step~10 
and Proposition~\ref{prop:maslovC}
\end{proof}

\begin{remark}\label{rmk:maslov}\rm
Let $\Lambda=(x,y,A,B)$ be an $(\alpha,\beta)$-trace in $\C$ 
as in Step~1 in the proof of Theorem~\ref{thm:maslov}.
Thus $x<y$ are real numbers, $A$ is the interval $[x,y]$, 
and $B$ is an embedded arc with endpoints $x,y$
which is oriented from $x$ to $y$ and is 
transverse to $\R$. Thus
$
Z:=B\cap\R
$
is a finite set.  Define a map
$$
f:Z\setminus\{y\}\to Z\setminus\{x\}
$$
as follows. Given $z\in Z\setminus\{y\}$ walk 
along $B$ towards $y$ and let $f(z)$ be the next 
intersection point with $\R$. This map is bijective.  
Now let $I$ be any of the three open intervals 
$(-\infty,x)$, $(x,y)$, $(y,\infty)$.  
Any arc in $B$ from $z$ to $f(z)$ with
both endpoints in the same interval $I$ 
can be removed by an isotopy of $B$ which
does not pass through $x,y$. 
Call $\Lambda$ a {\bf reduced $(\alpha,\beta)$-trace} 
if $z\in I$ implies $f(z)\notin I$ for each of the three intervals. 
Then every $(\alpha,\beta)$-trace is isotopic to a reduced 
$(\alpha,\beta')$-trace and the isotopy does not effect the 
numbers $\mu,k_x,k_y,\eps_x,\eps_y$.
\begin{figure}[htp]
\centering 
\includegraphics[scale=0.6]{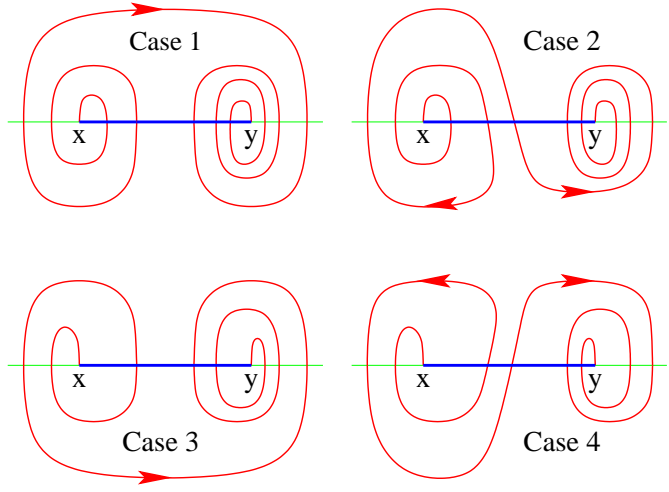} 
\caption{{Reduced $(\alpha,\beta)$-traces in $\C$.}}
\label{fig:maslov}
\end{figure}

Let $Z^+$ (respectively $Z^-$) denote the set 
of all points $z\in Z=B\cap\R$ where the positive 
tangent vectors in $T_zB$ point up (respectively down).
One can prove that every reduced $(\alpha,\beta)$-trace
satisfies one of the following conditions. 

\smallskip
\centerline{{\bf Case~1:} 
If $z\in Z^+\setminus\{y\}$ then $f(z)>z$.\qquad
{\bf Case~2:} $Z^-\subset[x,y]$.}

\centerline{{\bf Case~3:} 
If $z\in Z^-\setminus\{y\}$ then $f(z)>z$.\qquad
{\bf Case~4:} $Z^+\subset[x,y]$.}

\smallskip\noindent
(Examples with $\eps_x=1$ and $\eps_y=-1$ 
are depicted in Figure~\ref{fig:maslov}.)
One can then show directly that the reduced $(\alpha,\beta)$-traces
satisfy equation~\eqref{eq:masLov}.  This gives rise 
to an alternative proof of Proposition~\ref{prop:maslovC} 
via case distinction. 
\end{remark}

\begin{proof}[Proof of Theorem~\ref{thm:maslov}
in the Simply Connected Case]
If $\Sigma$ is diffeomorphic to the $2$-plane
the result has been established 
in Proposition~\ref{prop:maslovC}.
Hence assume 
$$
\Sigma=S^2. 
$$
Let $u\in\cD(x,y)$.  
If $u$ is not surjective the assertion follows from the case 
of the complex plane (Proposition~\ref{prop:maslovC})
via stereographic projection.  Hence assume $u$ is 
surjective and choose a regular value 
$z\in S^2\setminus(\alpha\cup\beta)$ of $u$.  
Denote 
$$
u^{-1}(z) = \{z_1,\dots,z_k\}.
$$
For $i=1,\dots,k$ let $\eps_i=\pm1$ according to whether or not
the differential $du(z_i):\C\to T_z\Sigma$ is orientation preserving.
Choose an open disc $\Delta\subset S^2$ centered at $z$
such that 
$$
\bar\Delta\cap(\alpha\cup\beta)=\emptyset
$$
and $u^{-1}(\Delta)$ is a union of open neighborhoods
$U_i\subset\D$ of $z_i$ with disjoint closures such that 
$$
u|_{U_i}:U_i\to\Delta
$$ 
is a diffeomorphism for each $i$
which extends to a neighborhood of $\bar U_i$. 
Now choose a continuous map $u':\D\to S^2$ which agrees
with $u$ on $\D\setminus\bigcup_iU_i$ and restricts to a 
diffeomorphism from $\bar U_i$ to $S^2\setminus\Delta$ 
for each $i$. Then $z$ does not belong to the image of $u'$
and hence equation~\eqref{eq:maslov} holds for $u'$
(after smoothing along the boundaries $\p U_i$).  
Moreover, the diffeomorphism 
$$
u'|_{\bar U_i}:\bar U_i\to S^2\setminus\Delta
$$
is orientation preserving if and only if $\eps_i=-1$.
Hence
\begin{equation*}
\begin{split}
\mu(\Lambda_u) &= \mu(\Lambda_{u'}) + 4\sum_{i=1}^k\eps_i,\\
m_x(\Lambda_u) &= m_x(\Lambda_{u'}) + 4\sum_{i=1}^k\eps_i,\\
m_y(\Lambda_u) &= m_y(\Lambda_{u'}) + 4\sum_{i=1}^k\eps_i.
\end{split}
\end{equation*}
By Proposition~\ref{prop:maslovC} equation~\eqref{eq:maslov} 
holds for $\Lambda_{u'}$ and hence it also holds for $\Lambda_u$.  
This proves Theorem~\ref{thm:maslov} when $\Sigma$ is simply
connected.
\end{proof}


\section{The Non Simply Connected Case}\label{sec:LIFT}

The key step for extending Proposition~\ref{prop:maslovC} 
to non-simply connected two-manifolds is the next
result about lifts to the universal cover. 

\begin{proposition}\label{prop:gm}
Suppose $\Sigma$ is not diffeomorphic to the $2$-sphere.
Let ${\Lambda=(x,y,\w)}$ be an 
$(\alpha,\beta)$-trace and ${\pi:\C\to\Sigma}$ be a 
universal covering. Denote by $\Gamma\subset\Diff(\C)$
the group of deck transformations. 
Choose an element ${\tx\in\pi^{-1}(x)}$ and let $\tal$ 
and $\tbe$ be the lifts of $\alpha$ and $\beta$ 
through~$\tx$. Let $\tLa=(\tx,\ty,\tilde\w)$ be the lift
of $\Lambda$ with left endpoint~$\tx$. Then
\begin{equation}\label{eq:gm}
m_{g\tx}(\tLa) + m_{g^{-1}\ty}(\tLa) 
= 0
\end{equation}
for every $g\in\Gamma\setminus\{\id\}$.
\end{proposition}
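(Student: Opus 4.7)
I would tackle this by computing both $m_{g\tx}(\tLa)$ and $m_{g^{-1}\ty}(\tLa)$ explicitly using the winding-number representation $\tilde\w(\tz) = \mathrm{wind}(\gamma_{\tal} \ast \bar\gamma_{\tbe},\tz)$ furnished by Remark~\ref{rmk:trace}(iii), and then distinguishing cases according to where $g\tx$ and $g^{-1}\ty$ sit relative to the distinguished lifts $\tal,\tbe$. The crucial observation is that, because $\tLa$ is an $(\tal,\tbe)$-trace rather than a $(\pi^{-1}(\alpha),\pi^{-1}(\beta))$-trace, the chain $\tilde\w$ only jumps across $\tal$ and $\tbe$; it is locally constant across any other component $g'\tal$ or $g'\tbe$ with $g'\ne\id$ (when restricted to one of these). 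Consequently, for a point $z$ sitting on a lift $g\tal \ne \tal$ but not on $\tal\cup\tbe$, the four quadrant values around $z$ formed by $g\tal, g\tbe$ collapse to a single value $\tilde\w(z)$ each, while at a point on $\tal\setminus\tbe$ they collapse to two values, and the standard formula from Remark~\ref{rmk:trace}(i) applies only at $\tal\cap\tbe$.

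With this dictionary in place, the main case of the proof is when $g\tx$ and $g^{-1}\ty$ both lie off $\tal\cup\tbe$, so that we must show the identity
$$
\tilde\w(g\tx) + \tilde\w(g^{-1}\ty) = 0
$$
(since both $m$'s then equal $4$ times the corresponding $\tilde\w$-value). Using the equivariance $\mathrm{wind}(\gamma,g\tx) = \mathrm{wind}(g^{-1}\gamma,\tx)$, this would follow from a topological comparison between the loops $g^{-1}\gamma$ (based at $g^{-1}\tx$) and $\gamma$ itself, viewed from the two test points $\tx$ and $g^{-1}\ty$. The plan is to exploit that $\pi\circ\gamma$ is null-homotopic in $\Sigma$, so any two lifts starting from $g^{-1}\tx$ and $\tx$ of the same boundary representative agree up to a controlled $g$-shift, and to track the cancellation of signed crossings with any ray going to infinity.

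The degenerate cases, where $g$ lies in $\Gamma_{\tal}$ or $\Gamma_{\tbe}$ (so $g\tx$ sits on one of the distinguished lifts), I would handle by reducing to the generic case via a small perturbation of the path $\gamma_{\tal}$ or $\gamma_{\tbe}$ off $g\tx$ inside its own lift, together with a direct computation of the boundary-jump contributions using the one-chain $\p\tilde\w = \tilde\nu_\alpha - \tilde\nu_\beta$ from Lemma~\ref{le:boundary}; this simply accounts for how many sides of the quadrant decomposition collapse, and pairs up with the analogous collapse at $g^{-1}\ty$ under the involution $g\mapsto hg^{-1}$ (where $\ty = h\tx$).

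The hard part will be step four: the actual cancellation in the generic case. The challenge is that $\tilde\w(g\tx)$ and $\tilde\w(g^{-1}\ty)$ are winding numbers of a single loop $\gamma$ about two seemingly unrelated points, and making the antisymmetry explicit demands producing either a canonical homotopy that swaps their roles, or a bilinear pairing that vanishes on the relevant class. I expect the cleanest route is to write $\tilde\w(g\tx)-\tilde\w(g^{-1}\ty)$ as the signed intersection of a $g$-equivariantly chosen arc from $g\tx$ to $g^{-1}\ty$ with $\gamma$, then observe that this arc pairs with $\gamma_{\tal}$ and $\gamma_{\tbe}$ in contributions that differ only in sign because the deck action reverses the orientation of the two relevant half-boundaries of $\D$ in the involution model $\zeta\mapsto -\bar\zeta$ of $\cD(\ty,\tx)$.
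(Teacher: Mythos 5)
Your proposal misses the central structural move in the paper's argument, and the "hard step four" you flag is not just hard---the method you sketch for it cannot succeed as stated.

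The difficulty is this: a direct winding-number comparison of $\tilde\w(g\tx)$ with $\tilde\w(g^{-1}\ty)$, using intersection numbers and the equivariance you invoke (together with $\tA\cap g\tA=\emptyset$, $\tB\cap g^{-1}\tB=\emptyset$), only yields the \emph{symmetric} identity
$$
\tilde\w(g\ty)-\tilde\w(g\tx)=\tilde\w(g^{-1}\ty)-\tilde\w(g^{-1}\tx),
$$
equivalently $m_{g\tx}(\tLa)-m_{g\ty}(\tLa)=m_{g^{-1}\ty}(\tLa)-m_{g^{-1}\tx}(\tLa)$. This is equation~\eqref{eq:GM} in the paper, not~\eqref{eq:gm}. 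Indeed Example~\ref{ex:annulus1} shows that individual values $m_{g\tx}(\tLa)$ can be nonzero (there $m_{\tx+1}=4$, $m_{\ty-1}=-4$), so there is no local cancellation to be found: the zero in~\eqref{eq:gm} is a global fact. The step you are missing is the Annulus Reduction (Lemma~\ref{le:gm}): from~\eqref{eq:GM} for all $g$, the paper passes to~\eqref{eq:gm} by projecting $\tLa$ to the cylinder $\C/\langle g\rangle$ and comparing the Viterbo--Maslov index of $\tLa$ with that of its projection $\Lambda_0$, using the already-established \emph{planar} case of the main theorem (Proposition~\ref{prop:maslovC}) applied to both. This is where the index $\mu$ enters the argument; your proposal never uses it at all. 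The vague appeal to a ``canonical homotopy that swaps their roles'' or a ``bilinear pairing that vanishes on the relevant class'' is precisely the gap, and it is not closed by the involution $\zeta\mapsto-\bar\zeta$: that map relates the data for $g$ to the data for $g^{-1}$ with a sign, which recovers~\eqref{eq:GM}, not the sum being zero.

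A second, smaller gap: the degenerate cases (where $g\tx$ or $g^{-1}\ty$ lie on $\tal$, on $\tbe$, or on both) cannot be disposed of by ``a small perturbation of $\gamma_\tal$ or $\gamma_\tbe$ off $g\tx$ inside its own lift.'' When $g\tal=\tal$ and $g\tbe=\tbe$ the point $g\tx$ is a genuine transverse intersection of the two distinguished lifts, the four quadrant values of $\tilde\w$ are genuinely distinct, and no perturbation of the boundary paths within $\tal$ or $\tbe$ removes the crossing. The paper handles these cases by a refined intersection computation (Step~1 of the proof), by a reverse induction over catenations of shifted arcs (Steps~3--4), and by the Isotopy Argument (Lemma~\ref{le:zero}), which invokes Epstein's theorem that freely homotopic embedded loops in a surface are isotopic; none of this machinery is replaced by your sketch.

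What you do have right: the observation that $m_{g\tx}(\tLa)=4\tilde\w(g\tx)$ when $g\tx\notin\tA\cup\tB$, the use of the winding-number representation from Remark~\ref{rmk:trace}(iii), and the disjointness $\tA\cap g\tA=\emptyset$, $\tB\cap g^{-1}\tB=\emptyset$ are all ingredients the paper also uses (Lemmas~\ref{le:AB} and~\ref{le:wg}). But these only prove the weaker symmetric identity; without the annulus reduction you cannot finish.
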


\bigbreak

\begin{lemma}[{\bf Annulus Reduction}]\label{le:gm}
Suppose $\Sigma$ is not diffeomorphic to the $2$-sphere. 
Let $\Lambda$, $\pi$, $\Gamma$, $\tLa$
be as in Proposition~\ref{prop:gm}.  If 
\begin{equation}\label{eq:GM}
m_{g\tx}(\tLa) - m_{g\ty}(\tLa)
= m_{g^{-1}\ty}(\tLa) - m_{g^{-1}\tx}(\tLa)
\end{equation}
for every $g\in\Gamma\setminus\{\id\}$ 
then equation~\eqref{eq:gm} holds 
for every $g\in\Gamma\setminus\{\id\}$.
\end{lemma}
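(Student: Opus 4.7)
The plan is to apply Proposition~\ref{prop:maslovC} to the trace $\tLa$ both in the universal cover $\C$ and in the cyclic annular cover $A_g := \C/\langle g\rangle$ for each $g \in \Gamma\setminus\{\id\}$, and then combine the resulting identities with the hypothesis~\eqref{eq:GM} to deduce~\eqref{eq:gm}. Since $\Sigma$ is not diffeomorphic to $S^2$, the deck group $\Gamma$ is torsion-free, so $\langle g\rangle\cong\Z$ and $A_g$ is an infinite cylinder, which embeds into $\C$ and is therefore planar. Denote the projection by $q\colon\C\to A_g$; then $u_{A_g} := q\circ\tu$ is an $(\alpha_{A_g},\beta_{A_g})$-trace in $A_g$ with $\alpha_{A_g} := q(\tal)$ and $\beta_{A_g} := q(\tbe)$ embedded one-submanifolds meeting transversally, and $\mu(u_{A_g}) = \mu(\tLa)$ since the Viterbo--Maslov index is a local invariant.

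Summing the degree function $\w_{u_{A_g}}$ sector by sector over the fiber $q^{-1}(q(\tx)) = \{g^n\tx : n \in \Z\}$ shows that $m_{q(\tx)}(\Lambda_{u_{A_g}}) = \sum_{n\in\Z} m_{g^n\tx}(\tLa)$, and analogously for $\ty$. Applying Proposition~\ref{prop:maslovC} in both $\C$ and $A_g$, equating the two expressions for $\mu$, and cancelling the $n=0$ terms yields the annulus identity
\begin{equation*}
\sum_{n\ne 0}\bigl(m_{g^n\tx}(\tLa) + m_{g^n\ty}(\tLa)\bigr) = 0
\end{equation*}
for every $g \ne \id$. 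Applying this identity with $g$ replaced by $g^d$ for each $d\ge 1$ and using the compact support of $\tilde\w$, a downward induction on the largest index with nonzero contribution (i.e.\ M\"obius inversion) produces
\begin{equation*}
\bigl(m_{g^n\tx} + m_{g^n\ty}\bigr) + \bigl(m_{g^{-n}\tx} + m_{g^{-n}\ty}\bigr) = 0 \qquad\text{for every } n \ne 0.
\end{equation*}
The hypothesis~\eqref{eq:GM} applied to $g^n$ rearranges to $m_{g^n\tx} + m_{g^{-n}\tx} = m_{g^n\ty} + m_{g^{-n}\ty}$; combining with the previous relation forces both sides to vanish, yielding the ``diagonal'' identities $m_{g\tx}(\tLa) + m_{g^{-1}\tx}(\tLa) = 0$ and $m_{g\ty}(\tLa) + m_{g^{-1}\ty}(\tLa) = 0$ for every $g \ne \id$.

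The main obstacle is then to upgrade these diagonal vanishings to the mixed identity~\eqref{eq:gm}. Using the second diagonal vanishing $m_{g^{-1}\ty} = -m_{g\ty}$, equation~\eqref{eq:gm} reduces to $m_{g\tx}(\tLa) = m_{g\ty}(\tLa)$, and this last identity is not forced by the cyclic annulus identity and the hypothesis alone (both collapse to tautologies modulo the diagonal vanishings). I expect to close the gap by applying the annulus reduction to the conjugate subgroups $\langle hgh^{-1}\rangle$ for $h \in \Gamma$, exploiting the equivariance $m_{hg^nh^{-1}z}(\tLa) = m_{g^n(h^{-1}z)}(h^{-1}\tLa)$ to reinterpret the resulting identities in terms of the shifted lifts $h^{-1}\tLa$. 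The additional linear constraints---together with the hypothesis applied to conjugates---should suffice to force $m_{g\tx}(\tLa) = m_{g\ty}(\tLa)$ for each $g\ne\id$ and thereby complete the proof.
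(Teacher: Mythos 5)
Your first two stages are correct and do mirror the paper's main device---projecting to the cyclic annulus quotient $\C/\langle g\rangle$, applying Proposition~\ref{prop:maslovC} on both sides, and cancelling the $n=0$ terms to get $\sum_{n\ne 0}\bigl(m_{g^n\tx}(\tLa)+m_{g^n\ty}(\tLa)\bigr)=0$. The M\"obius-inversion step to obtain the paired identity and then the diagonal vanishings $m_{g^n\tx}+m_{g^{-n}\tx}=0$, $m_{g^n\ty}+m_{g^{-n}\ty}=0$ is also correct. But you have candidly diagnosed the resulting dead end yourself: modulo these diagonal relations, both the annulus identity and the hypothesis~\eqref{eq:GM} become tautologies, and the remaining content of~\eqref{eq:gm} is exactly $m_{g\tx}(\tLa)=m_{g\ty}(\tLa)$, which you cannot reach. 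The conjugate-subgroup idea is only a sketch, and it is far from clear that it closes the gap---the quantities $m_{hg^nh^{-1}\tx}(\tLa)$ involve the orbit of $\tx$ under elements that generally move $\tal,\tbe$ off themselves, and the equivariance you invoke relates them to a \emph{different} lift $h^{-1}\tLa$, not to $\tLa$ itself, so the new ``constraints'' don't feed back into the same unknowns.

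The missing ingredient is the paper's maximality trick inside a proof by contradiction. Instead of running the annulus identity for arbitrary $g$ and then inverting, one assumes~\eqref{eq:gm} fails for some $h\ne\id$, notes (by compact support of $\tilde\w$) that only finitely many $h$ can fail, and picks $k\ge1$ maximal with $m_{h^k\tx}(\tLa)+m_{h^{-k}\ty}(\tLa)\ne 0$. Setting $g:=h^k$, all the terms with $|j|\ge 2$ in the annulus identity $\sum_{j\ne0}\bigl(m_{g^j\tx}+m_{g^{-j}\ty}\bigr)=0$ already vanish by the choice of $k$ (for $j\le -2$ one additionally applies~\eqref{eq:GM} to $h^{|j|k}$), leaving only $j=\pm1$. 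Then~\eqref{eq:GM} for $g$ equates the $j=1$ and $j=-1$ contributions, so the annulus identity collapses to $2\bigl(m_{g\tx}(\tLa)+m_{g^{-1}\ty}(\tLa)\bigr)=0$, contradicting the assumption. This is precisely the cross-term information that your Möbius inversion discards: by choosing $g$ at the boundary of the support, the sum is forced to localize on $j=\pm1$, where~\eqref{eq:GM} applies directly, rather than pairing $j$ with $-j$ globally. You should replace the inversion step and the conjugation plan by this contradiction/maximality argument.
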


\begin{proof}
If~\eqref{eq:gm} does not hold then there is 
a deck transformation $h\in\Gamma\setminus\{\id\}$ 
such that
$
m_{h\tx}(\tLa) + m_{h^{-1}\ty}(\tLa) \ne 0.
$
Since there can only be finitely many such 
$h\in\Gamma\setminus\{\id\}$, 
there is an integer $k\ge 1$ such that 
$m_{h^k\tx}(\tLa)+m_{h^{-k}\ty}(\tLa)\ne0$
and $m_{h^\ell\tx}(\tLa)+m_{h^{-\ell}\ty}(\tLa)=0$ 
for every integer $\ell>k$. Define $g:=h^k$.  Then
\begin{equation}\label{eq:GM2}
m_{g\tx}(\tLa) + m_{g^{-1}\ty}(\tLa) \ne 0
\end{equation}
and $m_{g^k\tx}(\tLa) + m_{g^{-k}\ty}(\tLa)=0$
for every integer $k\in\Z\setminus\{-1,0,1\}$.
Define
$$
\Sigma_0:=\C/\Gamma_0,\qquad 
\Gamma_0 := \left\{g^k\,|\,k\in\Z\right\}.
$$
Then $\Sigma_0$ is diffeomorphic to the annulus.
Let $\pi_0:\C\to\Sigma_0$ be the obvious 
projection, define $\alpha_0:=\pi_0(\tal)$, $\beta_0:=\pi_0(\tbe)$,
and let $\Lambda_0:=(x_0,y_0,\w_0)$
be the $(\alpha_0,\beta_0)$-trace in $\Sigma_0$ with 
$x_0:=\pi_0(\tx)$, $y_0:=\pi_0(\ty)$, and
$$
\w_0(z_0) := \sum_{\tz\in\pi_0^{-1}(z_0)}\tilde\w(\tz),\qquad 
z_0\in\Sigma_0\setminus(\alpha_0\cup\beta_0).
$$
Then 
\begin{equation*}
\begin{split}
m_{x_0}(\Lambda_0) 
&= m_\tx(\tLa) + \sum_{k\in\Z\setminus\{0\}}m_{g^k\tx}(\tLa),\\
m_{y_0}(\Lambda_0) 
&= m_\ty(\tLa) + \sum_{k\in\Z\setminus\{0\}}m_{g^{-k}\ty}(\tLa).
\end{split}
\end{equation*}
By Proposition~\ref{prop:maslovC} both $\tLa$ 
and $\Lambda_0$ satisfy equation~\eqref{eq:maslov} 
and they have the same Viterbo--Maslov index. Hence
\begin{eqnarray*}
0 
&=&
\mu(\Lambda_0)-\mu(\tLa) \\
&=&
\frac{m_{x_0}(\Lambda_0) + m_{y_0}(\Lambda_0)}{2}
- \frac{m_\tx(\tLa)+m_\ty(\tLa)}{2} \\
&=&
\frac12
\sum_{k\ne 0}\left(m_{g^k\tx}(\tLa)+m_{g^{-k}\ty}(\tLa)\right) \\
&=&
m_{g\tx}(\tLa)+m_{g^{-1}\ty}(\tLa).
\end{eqnarray*}
Here the last equation follows from~\eqref{eq:GM}.
This contradicts~\eqref{eq:GM2} and proves Lemma~\ref{le:gm}.
\end{proof}

\bigbreak

\begin{lemma}\label{le:AB}
Suppose $\Sigma$ is not diffeomorphic to the $2$-sphere.
Let $\Lambda$, $\pi$, $\Gamma$, $\tLa$ be as in 
Proposition~\ref{prop:gm} and denote 
$\nu_\tal:=\p\tilde\w|_{\tal\setminus\tbe}$
and $\nu_\tbe:=-\p\tilde\w|_{\tbe\setminus\tal}$.
Choose smooth paths
$$
\gamma_\tal:[0,1]\to\tal,\qquad \gamma_\tbe:[0,1]\to\tbe
$$
from $\gamma_\tal(0)=\gamma_\tbe(0)=\tx$ to
$\gamma_\tal(1)=\gamma_\tbe(1)=\ty$ such that
$\gamma_\tal$ is an immersion when $\nu_\tal\not\equiv0$
and constant when $\nu_\tal\equiv0$, the same holds
for $\gamma_\tbe$, and
\begin{equation*}
\begin{split}
\nu_\tal(\tz) =\deg(\gamma_\tal,\tz)&\quad
\mbox{for}\quad \tz\in\tal\setminus\{\tx,\ty\},\\
\nu_\tbe(\tz) =\deg(\gamma_\tbe,\tz)&\quad
\mbox{for}\quad\tz\in\tbe\setminus\{\tx,\ty\}.
\end{split}
\end{equation*}
Define 
$$
\tA := \gamma_\tal([0,1]),\qquad \tB := \gamma_\tbe([0,1]).
$$
Then, for every $g\in\Gamma$, we have
\begin{equation}\label{eq:gxyA1}
g\tx\in\tA
\qquad\iff\qquad 
g^{-1}\ty\in\tA,
\end{equation}
\begin{equation}\label{eq:gxyA2}
g\tx\notin\tA\;\;\mbox{ and }\;\;g\ty\notin\tA 
\qquad\iff\qquad 
\tA\cap g\tA=\emptyset,
\end{equation}
\begin{equation}\label{eq:gxyA3}
g\tx\in\tA\;\;\mbox{ and }\;\;g\ty\in\tA 
\qquad\iff\qquad g=\id.
\end{equation}
The same holds with $\tA$ replaced by $\tB$.
\end{lemma}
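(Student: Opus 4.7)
The plan is to reduce the three equivalences to elementary interval arithmetic inside the lift $\tal$, after pinning down the shape of $\tA$ and the behaviour of the action of $g$ on $\tal$.

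First I would analyse $\tal$ and $\tA$. The lift $\tal$ is a connected $1$-submanifold of $\C$ without boundary, hence diffeomorphic to $\R$ or $S^1$. In the $S^1$ case any $g\in\Gamma$ stabilising $\tal$ restricts to an element of the finite deck group of the covering $\pi|_\tal:\tal\to\alpha$, so some power $g^n$ fixes $\tal$ pointwise; freeness of the $\Gamma$-action on $\C$ then forces $g^n=\id$, and torsion-freeness of $\Gamma\cong\pi_1(\Sigma)$ (using $\Sigma\not\cong S^2$) forces $g=\id$. So whenever a nontrivial $g\in\Gamma$ stabilises $\tal$, we have $\tal\cong\R$. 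The hypothesis on $\gamma_\tal$ further makes $\gamma_\tal$ either constant (when $\nu_\tal\equiv0$, in which case $\tx=\ty$ and $\tA=\{\tx\}$) or an immersion $[0,1]\to\tal$; identifying $\tal\cong\R$, this immersion has nowhere-vanishing derivative of constant sign, so it is strictly monotonic and injective, and $\tA$ is the closed subarc of $\tal$ joining $\tx$ to $\ty$.

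Next, since $\alpha$ is embedded and $\pi$ a local diffeomorphism, distinct lifts of $\alpha$ are disjoint, so for every $g\in\Gamma$ either $g\tal=\tal$ or $g\tal\cap\tal=\emptyset$. In the disjoint case $\tA\subset\tal$ and $g\tA\subset g\tal$ are disjoint, and none of $g\tx,g\ty,g^{-1}\tx,g^{-1}\ty$ lies in $\tA\subset\tal$; hence \eqref{eq:gxyA1}, \eqref{eq:gxyA2} and \eqref{eq:gxyA3} all hold trivially. In the remaining case $g\tal=\tal$ with $g\ne\id$ we have $\tal\cong\R$ by the previous paragraph, and $g|_\tal$ is a nontrivial deck transformation of the infinite cyclic cover $\tal\to\alpha$, hence an orientation-preserving translation without fixed points. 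Choose a diffeomorphism $\tal\cong\R$ sending $\tx\mapsto p$ and $\ty\mapsto q$ with $p\le q$; then $\tA=[p,q]$ and $g$ acts by $t\mapsto t+d$ with $d\ne0$. The three equivalences now reduce to direct interval checks: $p+d\in[p,q]\iff 0\le d\le q-p\iff q-d\in[p,q]$ gives \eqref{eq:gxyA1}; the condition that both $p+d,q+d\in[p,q]$ forces $d=0$, contradicting $g\ne\id$, which gives \eqref{eq:gxyA3}; and $[p,q]\cap[p+d,q+d]=\emptyset\iff|d|>q-p\iff p+d\notin[p,q]$ and $q+d\notin[p,q]$, which gives \eqref{eq:gxyA2}. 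The argument for $\tB\subset\tbe$ is identical.

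The only delicate structural input is identifying $\tA$ as a closed subarc of $\tal\cong\R$ with endpoints $\tx,\ty$; the immersion hypothesis on $\gamma_\tal$ is precisely what supplies the monotonicity needed for this, and ruling out the $\tal\cong S^1$ case when $g\ne\id$ is the only place where the hypothesis $\Sigma\not\cong S^2$ enters. Everything else reduces to disjointness of distinct lifts of $\alpha$ combined with elementary interval arithmetic.
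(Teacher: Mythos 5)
Your proof is correct and takes essentially the same route as the paper's: split on whether $g\tal=\tal$ or $g\tal\cap\tal=\emptyset$, observe that the disjoint case is trivial, and reduce the remaining case to interval arithmetic on $\tal\cong\R$. The additional observations you make (that $\tA$ is an interval because an immersion $[0,1]\to\R$ is monotonic, and that no nontrivial $g$ can stabilise a compact lift $\tal\cong S^1$) are exactly the justifications the paper leaves implicit when it asserts ``we may also assume \dots that $\pi(\R)=\alpha$ \dots and $\tx,\ty\in\R=\tal$, thus $\tA=[\tx,\ty]$.''

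One small imprecision: you say ``Choose a diffeomorphism $\tal\cong\R$ sending $\tx\mapsto p$ and $\ty\mapsto q$ \dots then $g$ acts by $t\mapsto t+d$.'' An arbitrary such diffeomorphism need not conjugate $g|_\tal$ to a translation; conversely a diffeomorphism that does linearise $g$ cannot in general also prescribe the images of $\tx,\ty$. The fix is easy and does not change the conclusion: either first linearise a generator of $\Gamma_{\tal}$ (so $g$ becomes $t\mapsto t+d$ for some nonzero integer $d$, with $\tx,\ty$ sent to whatever they are sent to), as the paper implicitly does by taking $\tal=\R\subset\C$ with $\tz\mapsto\tz+1$ a deck transformation; or simply run the interval checks directly for a fixed-point-free increasing diffeomorphism $g:\R\to\R$, for which the three equivalences still hold (e.g.\ if $g(t)>t$ for all $t$ then $g(p)\in[p,q]\iff g(p)\le q\iff g^{-1}(q)\ge p\iff g^{-1}(q)\in[p,q]$, and similarly for the other two). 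Either repair makes your argument watertight.
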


\begin{proof}
If $\alpha$ is a contractible embedded circle or 
not an embedded circle at all we have 
$\tA\cap g\tA=\emptyset$ whenever $g\ne\id$
and this implies~\eqref{eq:gxyA1}, \eqref{eq:gxyA2}
and~\eqref{eq:gxyA3}. Hence assume 
$\alpha$ is a noncontractible embedded circle.
Then we may also assume, 
without loss of generality, 
that $\pi(\R)=\alpha$, 
the map $\tz\mapsto\tz+1$ is a deck transformation, 
$\pi$ maps the interval $[0,1)$ bijectively onto~$\alpha$,
and $\tx,\ty\in\R=\tal$ with $\tx<\ty$.  
Thus $\tA=[\tx,\ty]$ and, for every $k\in\Z$, 
$$
\tx+k\in[\tx,\ty]
\quad\iff\quad
0\le k\le \ty-\tx
\quad\iff\quad
\ty-k\in[\tx,\ty].
$$
Similarly, we have 
$$
\tx+k,\ty+k\notin[\tx,\ty]
\quad\iff\quad
[\tx+k,\ty+k]\cap[\tx,\ty]=\emptyset
$$
and
$$
\tx+k,\ty+k\in[\tx,\ty]
\quad\iff\quad
[\tx+k,\ty+k]\subset[\tx,\ty]
\quad\iff\quad k=0.
$$
This proves~\eqref{eq:gxyA1}, \eqref{eq:gxyA2}, 
and~\eqref{eq:gxyA3} for the deck transformation 
$\tz\mapsto\tz+k$.  If $g$ is any other deck transformation, 
then we have 
$$
\tal\cap g\tal=\emptyset
$$ 
and so~\eqref{eq:gxyA1}, \eqref{eq:gxyA2}, and~\eqref{eq:gxyA3}
are trivially satisfied. This proves Lemma~\ref{le:AB}.
\end{proof}

\begin{lemma}[{\bf Winding Number Comparison}]\label{le:wg}
Suppose $\Sigma$ is not diffeomorphic to the $2$-sphere.
Let $\Lambda$, $\pi$, $\Gamma$, $\tLa$
be as in Proposition~\ref{prop:gm}, and let
$\tA,\tB\subset\C$ be as in Lemma~\ref{le:AB}.
Then the following holds.

\smallskip\noindent{\bf (i)}
Equation~\eqref{eq:GM} holds for every $g\in\Gamma$
that satisfies $g\tx,g\ty\notin\tA\cup\tB$.

\smallskip\noindent{\bf (ii)}
If $\Lambda$ satisfies the arc condition then~\eqref{eq:gm}
holds for every $g\in\Gamma\setminus\{\id\}$.
\end{lemma}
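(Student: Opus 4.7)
The approach is to use Remark~\ref{rmk:trace}(iii): $\tilde\w(\tz)$ equals the winding number of the $1$-cycle $\sigma := \gamma_\tal - \gamma_\tbe$ around $\tz$, with support $\tA \cup \tB$. For part~(i), applying Lemma~\ref{le:AB}\eqref{eq:gxyA1} to both $\tA$ and $\tB$ promotes the hypothesis to the statement that all four points $g^{\pm 1}\tx$, $g^{\pm 1}\ty$ lie outside $\tA \cup \tB$. Consequently $\tilde\w$ is locally constant near each of them, which forces $m_q(\tLa) = 4\tilde\w(q)$ for every $q \in \{g^{\pm 1}\tx, g^{\pm 1}\ty\}$. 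Equation~\eqref{eq:GM} then reduces to the winding identity
\[
W(g\sigma + g^{-1}\sigma, \tx) = W(g\sigma + g^{-1}\sigma, \ty).
\]

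To verify this identity, compute the winding difference as the signed intersection number of $g\sigma + g^{-1}\sigma$ with the path $\gamma_\tal$ from $\tx$ to $\ty$. The diagonal terms $I(\gamma_\tal, g^{\pm 1}\gamma_\tal)$ vanish by Lemma~\ref{le:AB}\eqref{eq:gxyA2}, which gives $\tA \cap g^{\pm 1}\tA = \emptyset$. The remaining mixed terms are rewritten using $g$-invariance and antisymmetry of the intersection pairing, and compared with the parallel computation using $\gamma_\tbe$ (whose diagonal pieces $I(\gamma_\tbe, g^{\pm 1}\gamma_\tbe)$ vanish by $\tB \cap g^{\pm 1}\tB = \emptyset$). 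Combining the two expressions yields the desired cancellation.

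For part~(ii), the arc condition makes $\tA, \tB$ embedded arcs and $\sigma$ an embedded loop (apart from $\tx, \ty$). Split into cases based on which of $g^{\pm 1}\tx$, $g^{\pm 1}\ty$ meet $\tA \cup \tB$. In the generic case where none of the four meet the arcs, part~(i) gives~\eqref{eq:GM}; the fact that $\tilde\w$ takes values in $\{0, \pm 1\}$ (winding number of a simple loop) together with the Jordan-loop structure of $\sigma$ then upgrades~\eqref{eq:GM} to the stronger identity~\eqref{eq:gm}. In the remaining cases, Lemma~\ref{le:AB}\eqref{eq:gxyA3} forbids simultaneous membership $g\tx, g\ty \in \tA$ for $g \ne \id$, so a direct computation of the four $\tilde\w$-values around each relevant crossing, tracking the jumps induced by $\tA$, $\tB$, and their $g^{\pm 1}$-translates, verifies the cancellation $m_{g\tx}(\tLa) + m_{g^{-1}\ty}(\tLa) = 0$.

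The hardest step is the concluding vanishing in part~(i), where the two equivalent path-dependent expressions for the winding difference must combine to yield zero rather than just a tautology. A secondary difficulty is the sign-tracking in the non-generic cases of part~(ii), when a translate of $\tx$ or $\ty$ lies on the interior of $\tA$ or $\tB$: the four $\tilde\w$-values around such a crossing receive contributions from both $\sigma$ and its $g^{\pm 1}$-translates, requiring a careful case analysis.
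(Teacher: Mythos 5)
Your plan for part~(i) starts well---reducing to a winding-number identity for $\sigma = \gamma_\tal - \gamma_\tbe$ and observing $m_q(\tLa) = 4\tilde\w(q)$ whenever $q\notin\tA\cup\tB$---but the concluding ``combination'' does not yield vanishing. The two quantities $(g\sigma+g^{-1}\sigma)\cdot\gamma_\tal$ and $(g\sigma+g^{-1}\sigma)\cdot\gamma_\tbe$ both compute the \emph{same} winding difference, and after killing the diagonal terms via Lemma~\ref{le:AB}\eqref{eq:gxyA2} and applying $g$-equivariance plus antisymmetry, \emph{each} reduces to $\gamma_\tal\cdot g\gamma_\tbe + \gamma_\tal\cdot g^{-1}\gamma_\tbe$; equating them gives a tautology, which is precisely the failure mode you flag as the hardest step and never resolve. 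The identity you wrote down, $W(g\sigma+g^{-1}\sigma,\tx)=W(g\sigma+g^{-1}\sigma,\ty)$, is in fact not what the paper establishes: the paper proves $\tilde\w(g\ty)-\tilde\w(g\tx) = \tilde\w(g^{-1}\ty)-\tilde\w(g^{-1}\tx)$, i.e.\ $m_{g\tx}-m_{g\ty}=m_{g^{-1}\tx}-m_{g^{-1}\ty}$, which is also what the proofs of Lemma~\ref{le:gm} and of Step~1 of Proposition~\ref{prop:gm} actually use (the right-hand side of \eqref{eq:GM} as printed has its two terms transposed). This is $W(g\sigma-g^{-1}\sigma,\tx)=W(g\sigma-g^{-1}\sigma,\ty)$, and with that sign your manipulation \emph{does} close: intersecting the antisymmetric cycle with $\gamma_\tal$ and then transporting via $g$-equivariance gives $X=-X$, hence $X=0$. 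The paper's argument is a single-transversal version of the same idea: it computes $\tilde\w(g\ty)-\tilde\w(g\tx)$ via $g\gamma_\tal$ (killing $\tA\cap g\tA$), converts to $\gamma_\tal\cdot g^{-1}\gamma_\tbe$ by equivariance, and reads that off as the winding difference along $g^{-1}\gamma_\tbe$ (killing $\tB\cap g^{-1}\tB$).

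Part~(ii) is off target in two ways. First, your claim that the arc condition forces $\tilde\w\in\{0,\pm1\}$ is false: $\tA$ and $\tB$ are embedded arcs but may cross each other in their interiors, so $\sigma$ is not a Jordan loop, and Example~\ref{ex:annulus1} exhibits an arc-condition trace with $m_\ty(\tLa)=5$, impossible if $|\tilde\w|\le1$. Second, the case analysis you propose has no content: under the arc condition the arcs $\tA,\tB$ sit inside a single fundamental domain, so $g\tA\cap\tA=\emptyset$ and $g\tB\cap\tB=\emptyset$ for every $g\ne\id$, hence $g\tx,g\ty\notin\tA\cup\tB$ for all $g\ne\id$ and you are always in your ``generic'' case. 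The step you are missing is that \eqref{eq:GM} (for all $g\ne\id$) does not imply \eqref{eq:gm} directly; you must invoke Lemma~\ref{le:gm} (Annulus Reduction), which compares Maslov indices on the universal cover and on the quotient annulus $\C/\Gamma_0$ to upgrade the symmetric relation \eqref{eq:GM} to the vanishing statement \eqref{eq:gm}.
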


\begin{proof}
We prove~(i).  Let $g\in\Gamma$ such that $g\tx,g\ty\notin\tA\cup\tB$ 
and let $\gamma_\tal,\gamma_\tbe$ be as in Lemma~\ref{le:AB}.  
Then $\tilde\w(\tz)$ is the winding number of the loop 
$\gamma_\tal-\gamma_\tbe$ about the point 
$\tz\in\C\setminus(\tA\cup\tB)$. Moreover, the paths 
$$
g\gamma_\tal:[0,1]\to\C,\qquad
g\gamma_\tbe:[0,1]\to\C
$$ 
connect the points $g\tx,g\ty\in\C\setminus(\tA\cup\tB)$.
Hence
$$
\tilde\w(g\ty)-\tilde\w(g\tx)
= (\gamma_\tal-\gamma_\tbe)\cdot g\gamma_\tal
= (\gamma_\tal-\gamma_\tbe)\cdot g\gamma_\tbe.
$$
Similarly with $g$ replaced by $g^{-1}$. 
Moreover, it follows from Lemma~\ref{le:AB}, that
$$
\tA\cap g\tA =\emptyset,\qquad \tB\cap g^{-1}\tB=\emptyset.
$$
Hence
\begin{eqnarray*}
\tilde\w(g\ty) - \tilde\w(g\tx)
&=&  
\left(\gamma_\tal-\gamma_\tbe\right)\cdot g\gamma_\tal \\
&=&  
g\gamma_\tal\cdot\gamma_\tbe \\
&=&
\gamma_\tal\cdot g^{-1}\gamma_\tbe \\
&=&
\left(\gamma_\tal-\gamma_\tbe\right)\cdot g^{-1}\gamma_\tbe \\
&=&
\tilde\w(g^{-1}\ty) - \tilde\w(g^{-1}\tx)
\end{eqnarray*}
Here we have used the fact that every $g\in\Gamma$
is an orientation preserving diffeomorphism
of $\C$. Thus we have proved that
$$
\tilde\w(g\tx) + \tilde\w(g^{-1}\ty)
= \tilde\w(g\ty) + \tilde\w(g^{-1}\tx).
$$
Since $g\tx,g\ty\notin\tA\cup\tB$, 
we have 
$$
m_{g\tx}(\tLa)=4\tilde\w(g\tx),\qquad
m_{g^{-1}\ty}(\tLa)=4\tilde\w(g^{-1}\ty),
$$
and the same identities hold with $g$ replaced by $g^{-1}$.
This proves~(i).

We prove~(ii).
If $\Lambda$ satisfies the arc condition then $g\tA\cap\tA=\emptyset$
and $g\tB\cap\tB=\emptyset$ for every $g\in\Gamma\setminus\{\id\}$.
In particular, for every $g\in\Gamma\setminus\{\id\}$,
we have $g\tx,g\ty\notin\tA\cup\tB$ and hence~\eqref{eq:GM}
holds by~(i).  Hence it follows from Lemma~\ref{le:gm} 
that~\eqref{eq:gm} holds for every $g\in\Gamma\setminus\{\id\}$.
This proves Lemma~\ref{le:wg}. 
\end{proof}

The next lemma deals with $(\alpha,\beta)$-traces
connecting a point $x\in\alpha\cap\beta$ to itself. 
An example on the annulus is depicted in 
Figure~\ref{fig:annulus3}.

\begin{lemma}[{\bf Isotopy Argument}]\label{le:zero}
Suppose $\Sigma$ is not diffeomorphic to the $2$-sphere.
Let $\Lambda$, $\pi$, $\Gamma$, $\tLa$
be as in Proposition~\ref{prop:gm}.  Suppose that there 
is a deck transformation $g_0\in\Gamma\setminus\{\id\}$
such that $\ty = g_0\tx$.  Then $\Lambda$ has Viterbo--Maslov 
index zero and $m_{g\tx}(\tLa)=0$ for every 
$g\in\Gamma\setminus\{\id,g_0\}$.
\end{lemma}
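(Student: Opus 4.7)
The hypothesis $\ty = g_0\tx$ with $g_0 \ne \id$ forces $\pi(\tx) = \pi(\ty)$, i.e.\ $x = y$. Since $\tx$ and $\ty = g_0\tx$ both lie on $\tal$ and lifts of $\alpha$ through a given point of $\C$ are unique, $g_0\tal = \tal$; likewise $g_0\tbe = \tbe$. A nontrivial stabiliser in $\Gamma$ can occur only for the lift of a non-contractible embedded circle, so both $\alpha$ and $\beta$ must be of this type. The restriction of $g_0$ to $\tal$ is a fixed-point-free translation of the line $\tal$ and so orientation preserving, and similarly on $\tbe$; in particular, $g_0$ maps the oriented local picture of $(\tal,\tbe)$ at $\tx$ to the corresponding picture at $\ty$, so intersection indices at $\tx$ and $\ty$ agree.

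To show $\mu(\Lambda) = 0$ I would pass to the intermediate annular cover
\[
\pi_0\colon \C \to \Sigma_0 := \C/\langle g_0\rangle,
\]
and set $\alpha_0 := \pi_0(\tal)$, $\beta_0 := \pi_0(\tbe)$, $x_0 := \pi_0(\tx) = \pi_0(\ty)$. Any representative $\tu$ of $\tLa$ descends to $u_0 := \pi_0\circ\tu$, giving an $(\alpha_0,\beta_0)$-trace $\Lambda_0 = (x_0, x_0, \w_0)$ on the planar surface $\Sigma_0$. The Viterbo--Maslov index is invariant under composition with a local diffeomorphism of the target, so $\mu(\Lambda) = \mu(\tLa) = \mu(\Lambda_0)$. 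As $\Lambda_0$ has coincident endpoints and $\alpha_0, \beta_0$ are both embedded circles, Case~3 of Step~8 in the proof of Proposition~\ref{prop:maslovC} applies (after embedding $\Sigma_0$ into $\C$) and gives $\mu(\Lambda_0) = 2(\nu_{\alpha_0} - \nu_{\beta_0})$. The integers $\nu_{\alpha_0}, \nu_{\beta_0}$ are the signed winding numbers of the two boundary halves of $u_0$ around $\alpha_0, \beta_0$; their lifts are monotone paths along $\tA \subset \tal$ and $\tB \subset \tbe$ from $\tx$ to $g_0\tx$, each covering one $g_0$-period in the same direction (by orientation preservation of $g_0$). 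Hence $\nu_{\alpha_0} = \nu_{\beta_0}$ and $\mu(\Lambda) = 0$.

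For $m_{g\tx}(\tLa) = 0$ when $g \in \Gamma \setminus \{\id, g_0\}$, I would first homotope $\tu$ rel endpoints so that $\gamma_\tal, \gamma_\tbe$ are monotone parametrisations of embedded arcs $\tA \subset \tal$, $\tB \subset \tbe$ from $\tx$ to $\ty$; this is possible since $\tal, \tbe$ are simply connected, does not change $\tLa$, and makes $\tLa$ satisfy the arc condition on $\C$. If $g$ does not stabilise both $\tal$ and $\tbe$, then $g\tx \notin \tal \cap \tbe$, hence $g\tx \notin \tA\cup\tB$ and $m_{g\tx}(\tLa) = 4\tilde\w(g\tx)$. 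The equality $\nu_{\alpha_0} = \nu_{\beta_0}$ obtained above means the loop $\gamma_\tal - \gamma_\tbe$ descends to a null-homologous loop in the annulus $\Sigma_0$, so $\tilde\w$ vanishes at any $\tz \in \C$ whose $\pi_0$-image avoids $\supp \w_0$; this includes $\pi_0(g\tx)$ for every $g \notin \{\id,g_0\}$ modulo the $\langle g_0\rangle$-orbit structure on $\tal,\tbe$. If instead $g \in \Gamma \setminus \{\id,g_0\}$ stabilises both $\tal$ and $\tbe$, then $g\tx \in \tal\cap\tbe$ is a crossing distinct from $\tx, \ty$ lying outside $\tA\cup\tB$; the arc formula of Remark~\ref{rmk:trace}(i) then reduces $m_{g\tx}(\tLa)$ to four times a nearby value of $\tilde\w$, which again vanishes by the same annular argument.

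The main obstacle is the second assertion: one must translate the global annular null-homology into pointwise vanishing of the planar winding number $\tilde\w$ at each $g\tx$, and verify that no such $g\tx$ accidentally lies on the arcs $\tA, \tB$. Both steps are dispatched by exploiting the explicit $\langle g_0\rangle$-orbit structure on $\tal, \tbe$ together with the monotone-boundary form of $\tu$ set up at the start.
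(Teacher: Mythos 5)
Your argument for $\mu(\Lambda)=0$ follows the paper's strategy: pass to an annular quotient and apply Case~3 of Step~8 in the proof of Proposition~\ref{prop:maslovC}. (The paper quotients by the primitive generator, translation by $1$, getting $\nu_{\alpha_0}=\nu_{\beta_0}=\ell$; you quotient by $\langle g_0\rangle$ getting $\nu_{\alpha_0}=\nu_{\beta_0}=1$. Both give $\mu(\Lambda_0)=0$.) The orientation bookkeeping needed to conclude $\nu_{\alpha_0}=\nu_{\beta_0}$ is glossed over, but the idea is sound.

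The second assertion, $m_{g\tx}(\tLa)=0$ for $g\in\Gamma\setminus\{\id,g_0\}$, is where your proof has a genuine gap. Your key step is the claim that ``the loop $\gamma_\tal-\gamma_\tbe$ descends to a null-homologous loop in the annulus $\Sigma_0$, so $\tilde\w$ vanishes at any $\tz\in\C$ whose $\pi_0$-image avoids $\supp\w_0$.'' This is false: the relation is $\w_0(z_0)=\sum_{\tz\in\pi_0^{-1}(z_0)}\tilde\w(\tz)$, and vanishing of the fiber sum does not force vanishing of the individual terms $\tilde\w(\tz)$ (which indeed need not have constant sign). Moreover, in the case where $g$ stabilizes both $\tal$ and $\tbe$, your claim that $g\tx$ ``lies outside $\tA\cup\tB$'' is simply wrong: in the standard normalization ($\tal=\R$, $\tx=0$, $\ty=\ell$, $\tbe=\tbe+1$), $g\tx$ is an integer $k$ with $0<k<\ell$, which lies in $\tA\cap\tB$. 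The paper's proof relies essentially on Epstein's theorem (\cite[Theorem~4.1]{EPSTEIN}): since $\alpha$ and $\beta$ are embedded and based-homotopic at $x$, they are \emph{isotopic} with basepoint fixed. Lifting the isotopy $f:\R/\Z\times[0,1]\to\Sigma$ to $\tf:\R\times[0,1]\to\C$ and using that each slice $s\mapsto f(s,t)$ is an embedding one gets $g\tx\notin\tf([0,\ell]\times[0,1])$ for the $g$ that stabilize neither line; then $\tilde\w(\tz)=\deg(\tf,\tz)$ gives the pointwise vanishing $\tilde\w(g\tx)=0$. The remaining case of integer translations by $k$ with $0<k<\ell$ is handled by decomposing $\tLa$ as a sum of integer shifts of the $\ell=1$ trace. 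This isotopy input is the missing ingredient in your proposal; null-homology in the annulus is not enough.
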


\begin{figure}[htp]
\centering 
\includegraphics[scale=0.275]{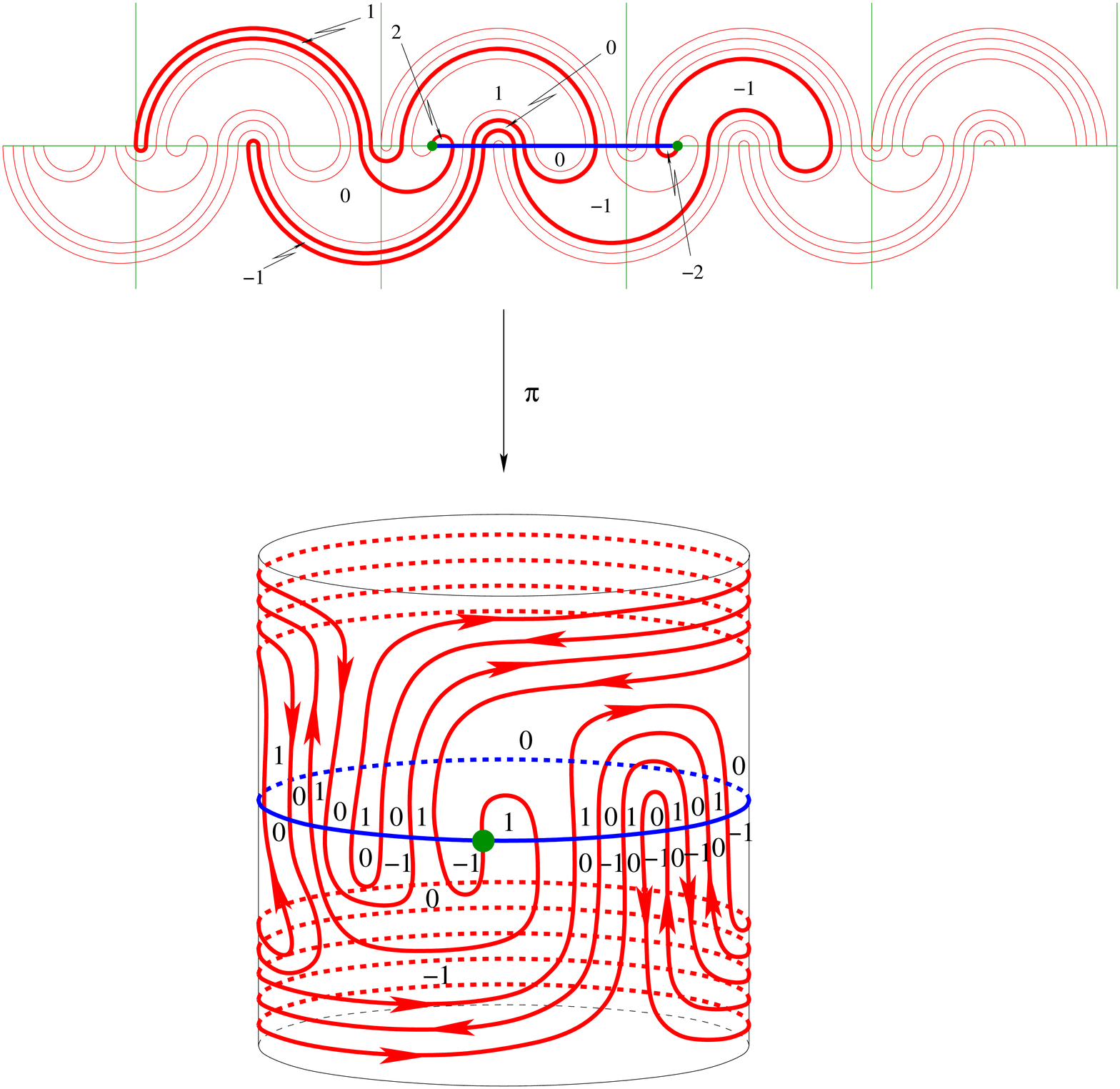} 
\caption{{An $(\alpha,\beta)$-trace on the annulus 
with $x=y$.}}
\label{fig:annulus3}
\end{figure}

\begin{proof}
By assumption, we have $\tal=g_0\tal$
and $\tbe=g_0\tbe$.  Hence $\alpha$ and $\beta$
are noncontractible embedded circles and some iterate 
of $\alpha$ is homotopic to some iterate of $\beta$.
Hence, by Lemma~\ref{le:dbae3}, $\alpha$ must 
be homotopic to $\beta$ (with some orientation).
Hence we may assume, without loss of generality, 
that $\pi(\R)=\alpha$, the map $\tz\mapsto\tz+1$ is a deck transformation, 
$\pi$ maps the interval $[0,1)$ bijectively onto~$\alpha$,
$\R=\tal$, $\tx=0\in\tal\cap\tbe$, $\tbe=\tbe+1$,
and that $\ty=\ell>0$ is an integer.  
Then $g_0$ is the translation 
$$
g_0(\tz)=\tz+\ell.
$$ 
Let $\tA:=[0,\ell]\subset\tal$ and let
$\tB\subset\tbe$ be the arc connecting $0$ to $\ell$.
Then, for $\tz\in\C\setminus(\tA\cup\tB)$, the integer
$\tilde\w(\tz)$ is the winding number of $\tA-\tB$ about $\tz$.
Define the projection $\pi_0:\C\to\C$ by
$$
\pi_0(\tz):=e^{2\pi\i\tz/k},
$$
denote $\alpha_0:=\pi_0(\tal)=S^1$ and $\beta_0:=\pi(\tbe)$,
and let 
$
\Lambda_0=(1,1,\w_0)
$
be the induced $(\alpha_0,\beta_0)$-trace in $\C$ with
$
\w_0(z) := \sum_{\tz\in\pi^{-1}(z)}\tilde\w(\tz).
$
Then $\alpha_0$ and $\beta_0$ are embedded circles
and have the winding number $\ell$ about zero.
Hence it follows from Step~8, Case~3 in the proof 
of Proposition~\ref{prop:maslovC} that $\Lambda_0$ 
has Viterbo--Maslov index zero and satisfies 
$m_{x_0}(\Lambda_0)+m_{y_0}(\Lambda_0)=2\mu(\Lambda_0)=0$.
Hence $\tLa$ also has Viterbo--Maslov index zero.

It remains to prove that $m_{g\tx}(\tLa)=0$ for every 
$g\in\Gamma\setminus\{\id,g_0\}$.  To see this we use 
the fact that the embedded loops $\alpha$
and $\beta$ are homotopic with fixed endpoint $x$.
Hence, by a Theorem of Epstein, they are
isotopic with fixed basepoint $x$
(see~\cite[Theorem~4.1]{EPSTEIN}).  
Thus there exists a smooth map 
$f:\R/\Z\times[0,1]\to\Sigma$ such that
$$
f(s,0)\in\alpha,\qquad f(s,1)\in\beta,\qquad
f(0,t)=x,
$$
for all $s\in\R/\Z$ and $t\in[0,1]$,
and the map $\R/\Z\to\Sigma:s\mapsto f(s,t)$ 
is an embedding for every $s\in[0,1]$.
Lift this homotopy to the universal cover 
to obtain a map $\tf:\R\times[0,1]\to\C$
such that $\pi\circ\tf = f$ and
$$
\tf(s,0)\in[0,1],\quad
\tf(s,1)\in\tB_1,\quad
\tf(0,t)=\tx,\quad
\tf(s+1,t)=\tf(s,t)+1
$$
for all $s\in\R$ and $t\in[0,1]$.  
Here $\tB_1\subset\tB$ denotes the arc in $\tB$ from $0$ to $1$.
Since the map $\R/\Z\to\Sigma:s\mapsto f(s,t)$ is injective
for every $t$, we have 
$$
g\tx\notin\{\tx,\tx+1,\dots,\tx+\ell\}\qquad\implies\qquad
g\tx \notin \tf([0,\ell]\times[0,1])
$$
for every every $g\in\Gamma$. Now choose a smooth map 
$\tu:\D\to\C$ with $\Lambda_\tu=\tLa$ (see Theorem~\ref{thm:trace}).  
Define the homotopy $F_\tu:[0,\ell]\times[0,1]\to\C$
by $F_\tu(s,t):=\tu(-\cos(\pi s/\ell),t\sin(\pi s/\ell))$.
Then, by Theorem~\ref{thm:trace}, 
$F_\tu$ is homotopic to $\tf|_{[0,\ell]\times[0,1]}$
subject to the boundary conditions $\tf(s,0)\in\tal=\R$,
$\tf(s,1)\in\tbe$, $\tf(0,t)=\tx$, $\tf(\ell,t)=\ty$.
Hence, for every $\tz\in\C\setminus(\tal\cup\tbe)$, we have 
$$
\tilde\w(\tz) = \deg(\tu,z) = \deg(F_\tu,\tz) = \deg(\tf,\tz).
$$
In particular, choosing $\tz$ near $g\tx$, we find
$
m_{g\tx}(\tLa) = 4\deg(\tf,g\tx) = 0
$
for every $g\in\Gamma$ that is not one of the translations 
$\tz\mapsto\tz+k$ for $k=0,1,\dots,\ell$.  This proves the
assertion in the case $\ell=1$.  

If $\ell>1$ it remains to prove $m_k(\tLa)=0$ for $k=1,\dots,\ell-1$.
To see this, let $\tA_1:=[0,1]$, $\tB_1\subset\tB$ 
be the arc from $0$ to $1$, $\tilde\w_1(\tz)$ be the winding 
number of $\tA_1-\tB_1$ about $\tz\in\C\setminus(\tA_1\cup\tB_1)$,
and define 
$
\tLa_1 := (0,1,\tilde\w_1).
$
Then, by what we have already proved, the 
$(\tal,\tbe)$-trace $\tLa_1$ satisfies $m_{g\tx}(\tLa_1)=0$
for every $g\in\Gamma$ other than the translations by $0$ or $1$.
In particular, we have $m_j(\tLa_1)=0$ for every $j\in\Z\setminus\{0,1\}$
and also $m_0(\tLa_1)+m_1(\tLa_1)=2\mu(\tLa_1)=0$.
Since $\tilde\w(\tz)=\sum_{j=0}^{\ell-1}\tilde\w_1(\tz-j)$
for $\tz\in\C\setminus(\tA\cup\tB)$, we obtain
$$
m_k(\tLa) = \sum_{j=0}^{\ell-1}m_{k-j}(\tLa_1) = 0
$$
for every $k\in\Z\setminus\{0,\ell\}$. 
This proves Lemma~\ref{le:zero}.
\end{proof}

The next example shows that Lemma~\ref{le:wg} cannot 
be strengthened to assert the identity $m_{g\tx}(\tLa)=0$ 
for every $g\in\Gamma$ with $g\tx,g\ty\notin\tA\cup\tB$.

\begin{example}\label{ex:annulus1}\rm
Figure~\ref{fig:annulus1} depicts an $(\alpha,\beta)$-trace 
$\Lambda=(x,y,\w)$ on the annulus $\Sigma=\C/\Z$ 
that has Viterbo--Maslov index one and satisfies the arc condition. 
The lift satisfies $m_\tx(\tLa)=-3$,  $m_{\tx+1}(\tLa)=4$, 
$m_{\ty}(\tLa)=5$, and $m_{\ty-1}(\tLa)=-4$. 
Thus $m_x(\Lambda)=m_y(\Lambda)=1$.
\begin{figure}[htp]
\centering 
\includegraphics[scale=0.4]{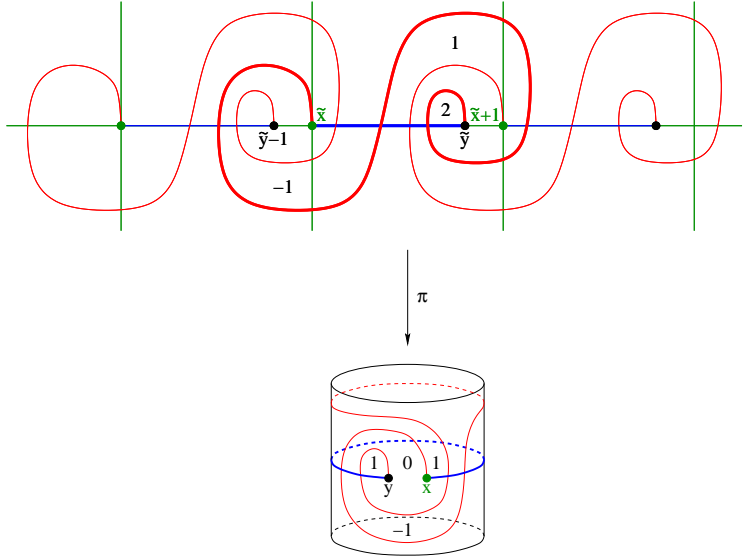} 
\caption{{An $(\alpha,\beta)$-trace on the annulus 
satisfying the arc condition.}}
\label{fig:annulus1}
\end{figure}
\end{example}

\begin{proof}[Proof of Proposition~\ref{prop:gm}]
The proof has five steps.

\medskip\noindent{\bf Step~1.}
{\it Let $\tA,\tB\subset\C$ be as in Lemma~\ref{le:AB}
and let $g\in\Gamma$ such that
$$
g\tx\in\tA\setminus\tB,\qquad g\ty\notin\tA\cup\tB.
$$ 
(An example is depicted in Figure~\ref{fig:torus}.)
Then~\eqref{eq:GM} holds.}

\begin{figure}[htp]
\centering 
\includegraphics[scale=0.2]{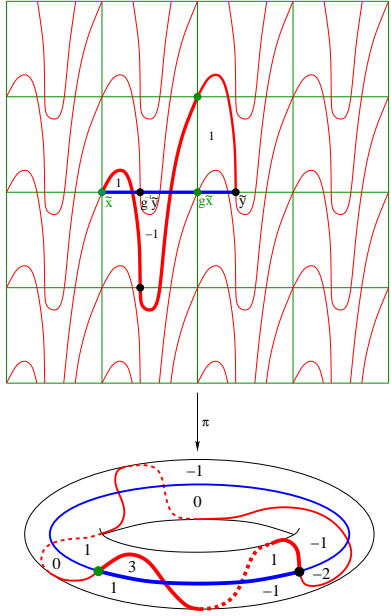} 
\caption{{An $(\alpha,\beta)$-trace on the torus 
not satisfying the arc condition.}}
\label{fig:torus}
\end{figure}

\medskip\noindent
The proof is a refinement of the winding number 
comparison argument in Lemma~\ref{le:wg}.
Since $g\tx\notin\tB$ we have $g\ne\id$ and, 
since $\tx,g\tx\in\tA\subset\tal$,
it follows that $\alpha$ is a noncontractible embedded circle.
Hence we may choose the universal covering $\pi:\C\to\Sigma$
and the lifts $\tal$, $\tbe$, $\tLa$ such that $\pi(\R)=\alpha$, 
the map $\tz\mapsto\tz+1$ is a deck transformation, 
the projection $\pi$ maps the interval $[0,1)$ 
bijectively onto $\alpha$, and
$$
\tal=\R,\qquad \tx=0\in\tal\cap\tbe,\qquad \ty>0.
$$
By assumption and Lemma~\ref{le:AB} 
there is an integer $k$ such that
$$
0<k<\ty,\qquad g\tx=k,\qquad g^{-1}\ty = \ty-k.
$$
Thus $g$ is the deck transformation $\tz\mapsto\tz+k$.

\smallbreak

Since $g\tx\notin\tB$ and $g\ty\notin\tB$ it follows from 
Lemma~\ref{le:AB} that $g^{-1}\ty\notin\tB$ 
and $g^{-1}\tx\notin\tB$ and hence, again
by Lemma~\ref{le:AB}, we have
$$
\tB\cap g\tB=\tB\cap g^{-1}\tB=\emptyset.
$$ 
With $\gamma_\tal$ and $\gamma_\tbe$ chosen as in 
Lemma~\ref{le:AB}, this implies
\begin{equation}\label{eq:zero}
\gamma_\tbe\cdot(\gamma_\tbe-k)
= (\gamma_\tbe+k)\cdot\gamma_\tbe
= 0.
\end{equation}
Since $k,-k,\ty+k,\ty-k\notin\tB$, there exists
a constant $\eps>0$ such that
$$
-\eps\le t\le\eps\qquad\implies\qquad
k+\i t,\;\;-k+\i t,\;\;
\ty-k+\i t,\;\;\ty+k+\i t\notin\tB.
$$
The paths $g\gamma_\tal\pm\i\eps$ 
and $g\gamma_\tbe\pm\i\eps$ both
connect the point $g\tx\pm\i\eps$ to $g\ty\pm\i\eps$.
Likewise, the paths $g^{-1}\gamma_\tal\pm\i\eps$ 
and $g^{-1}\gamma_\tbe\pm\i\eps$ both
connect the point $g^{-1}\tx\pm\i\eps$ to $g^{-1}\ty\pm\i\eps$.
Hence
\begin{eqnarray*}
\tilde\w(g\ty\pm\i\eps)
- \tilde\w(g\tx\pm\i\eps)
&=&
(\gamma_\tal-\gamma_\tbe)\cdot(g\gamma_\tal\pm\i\eps) \\
&=&
(\gamma_\tal-\gamma_\tbe)\cdot(\gamma_\tal+k\pm\i\eps) \\
&=&
(\gamma_\tal+k\pm\i\eps)\cdot\gamma_\tbe \\
&=&
\gamma_\tal\cdot(\gamma_\tbe-k\mp\i\eps) \\
&=&
(\gamma_\tal-\gamma_\tbe)\cdot(\gamma_\tbe-k\mp\i\eps) \\
&=&
(\gamma_\tal-\gamma_\tbe)\cdot(g^{-1}\gamma_\tbe\mp\i\eps) \\
&=&
\tilde\w(g^{-1}\ty\mp\i\eps)
- \tilde\w(g^{-1}\tx\mp\i\eps).
\end{eqnarray*}
Here the last but one equation follows from~\eqref{eq:zero}.
Thus we have proved
\begin{equation}\label{eq:step1}
\begin{split}
\tilde\w(g\tx+\i\eps) + \tilde\w(g^{-1}\ty-\i\eps)
&= \tilde\w(g^{-1}\tx-\i\eps) + \tilde\w(g\ty+\i\eps), \\
\tilde\w(g\tx-\i\eps) + \tilde\w(g^{-1}\ty+\i\eps)
&= \tilde\w(g^{-1}\tx+\i\eps) + \tilde\w(g\ty-\i\eps).
\end{split}
\end{equation}
Since
\begin{equation*}
\begin{split}
m_{g\tx}(\tLa) &= 2\tilde\w(g\tx+\i\eps) 
+ 2\tilde\w(g\tx-\i\eps), \\
m_{g\ty}(\tLa) &= 2\tilde\w(g\ty+\i\eps) 
+ 2\tilde\w(g\ty-\i\eps), \\
m_{g^{-1}\tx}(\tLa) &= 2\tilde\w(g^{-1}\tx+\i\eps) 
+ 2\tilde\w(g^{-1}\tx-\i\eps),\\
m_{g^{-1}\ty}(\tLa) &= 2\tilde\w(g^{-1}\ty+\i\eps) 
+ 2\tilde\w(g^{-1}\ty-\i\eps),
\end{split}
\end{equation*}
Step~1 follows by taking the sum 
of the two equations in~\eqref{eq:step1}.

\medskip\noindent{\bf Step~2.}
{\it Let $\tA,\tB\subset\C$ be as in Lemma~\ref{le:AB}
and let $g\in\Gamma$.  Suppose that either
$g\tx,g\ty\notin\tA$ or $g\tx,g\ty\notin\tB$.
Then~\eqref{eq:GM} holds.}

\medskip\noindent
If $g\tx,g\ty\notin\tA\cup\tB$ the assertion 
follows from Lemma~\ref{le:wg}.
If $g\tx\in\tA\setminus\tB$ and $g\ty\notin\tA\cup\tB$
the assertion follows from Step~1.
If $g\tx\notin\tA\cup\tB$ and $g\ty\in\tA\setminus\tB$
the assertion follows from Step~1 by interchanging $\tx$
and $\ty$.   Namely, \eqref{eq:GM} 
holds for $\tLa$ if and only if it holds for
the $(\tal,\tbe)$-trace
$
-\tLa:=(\ty,\tx,-\tilde\w).
$
This covers the case $g\tx,g\ty\notin\tB$. 
If $g\tx,g\ty\notin\tA$ the assertion follows 
by interchanging $\tA$ and $\tB$.
Namely, \eqref{eq:GM} 
holds for $\tLa$ if and only if it holds for
the $(\tbe,\tal)$-trace
$
\tLa^*:=(\tx,\ty,-\tilde\w).
$
This proves Step~2.

\medskip\noindent{\bf Step~3.}
{\it Let $\tA,\tB\subset\C$ be as in Lemma~\ref{le:AB}
and let $g\in\Gamma$ such that
$$
g\tx\in\tA\setminus\tB,\qquad g\ty\in\tB\setminus\tA.
$$ 
(An example is depicted in Figure~\ref{fig:annulus2}.)
Then~\eqref{eq:gm} holds for $g$ and $g^{-1}$.}

\begin{figure}[htp]
\centering 
\includegraphics[scale=0.28]{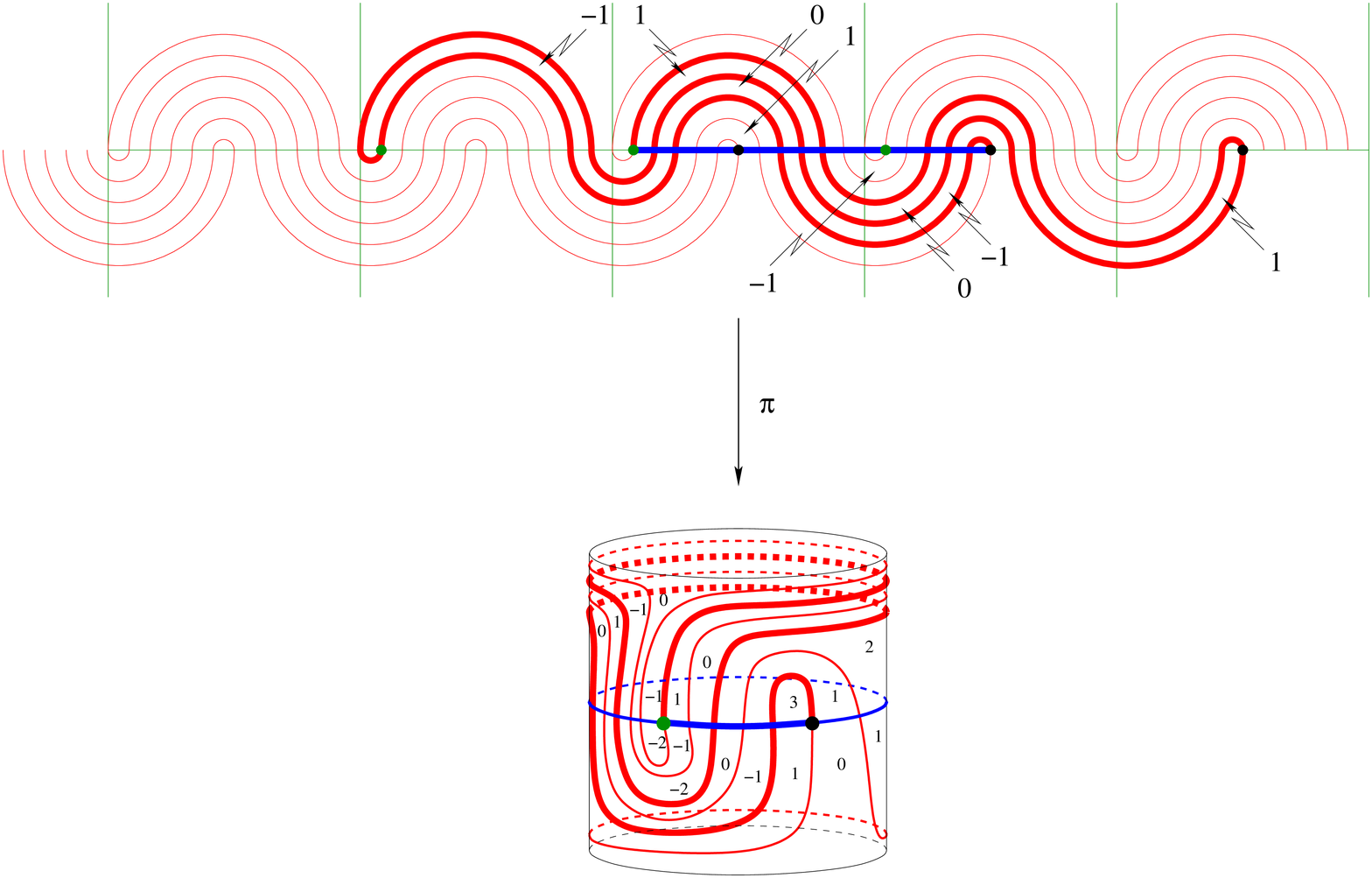} 
\caption{{An $(\alpha,\beta)$-trace on the annulus
with $g\tx\in\tA$ and $g\ty\in\tB$.}}
\label{fig:annulus2}
\end{figure}

\medskip\noindent
Since $g\tx\notin\tB$ 
(and $g\ty\notin\tA$) we have $g\ne\id$ and, 
since $\tx,g\tx\in\tA\subset\tal$ and $\ty,g\ty\in\tB\subset\tbe$,
it follows that $g\tal=\tal$ and $g\tbe=\tbe$.
Hence $\alpha$ and $\beta$ are noncontractible embedded circles
and some iterate of $\alpha$ is homotopic to some iterate of $\beta$.
So $\alpha$ is homotopic to~$\beta$ (with some orientation), 
by Lemma~\ref{le:dbae3}.
Hence we may choose the universal covering $\pi:\C\to\Sigma$
and the lifts ${\tal,\tbe,\tLa}$ such that $\pi(\R)=\alpha$, 
the map $\tz\mapsto\tz+1$ is a deck transformation, 
$\pi$~maps the interval $[0,1)$ bijectively onto $\alpha$, 
and $\tal=\R$, $\tx=0\in\tal\cap\tbe$, $\ty>0$.
Thus $\tA=[0,\ty]$ is the arc in $\tal$ 
from $0$ to $\ty$ and $\tB$ is the arc in $\tbe$ 
from $0$ to $\ty$. Moreover, $\tbe=\tbe+1$ and the arc 
in $\tbe$ from $0$ to $1$ is a fundamental domain 
for $\beta$.  By assumption and Lemma~\ref{le:AB} 
there is an integer $k$ such that $k\in\tA$ and $-k\in\tB$.
Hence $\tA$ does not contain any negative integers and
$\tB$ does not contain any positive integers.
Choose $k_\tA,k_\tB\in\N$ such that
$$
\tA\cap\Z=\left\{0,1,2,\cdots,k_\tA\right\},\qquad
\tB\cap\Z=\left\{0,-1,-2,\cdots,-k_\tB\right\}.
$$
For $0\le k\le k_\tA$ let $\tA_k\subset\tal$ and 
$\tB_k\subset\tbe$ be the arcs from $0$ to $\ty-k$
and consider the $(\tal,\tbe)$-trace
$$
\tLa_k:=(0,\ty-k,\tilde\w_k),\qquad
\p\tLa_k:=(0,\ty-k,\tA_k,\tB_k),
$$
where $\tilde\w_k(\tz)$ is the winding number of 
$\tA_k-\tB_k$ about $\tz\in\C\setminus(\tA_k\cup\tB_k)$. 
Note that $\tLa_0=\tLa$ and 
$$
\tB_k\cap\Z = \left\{0,-1,-2,\cdots,-k_\tB-k\right\}.
$$
We prove that, for each $k$,
the $(\tal,\tbe)$-trace $\tLa_k$ satisfies 
\begin{equation}\label{eq:jmk}
m_j(\tLa_k) + m_{\ty-k-j}(\tLa_k) = 0\qquad
\forall\,j\in\Z\setminus\{0\}.
\end{equation} 
If $\ty$ is an integer, then~\eqref{eq:jmk} 
follows from Lemma~\ref{le:zero}. 
Hence we may assume that $\ty$ is not an integer. 

We prove equation~\eqref{eq:jmk} by reverse induction on $k$.
First let $k=k_\tA$.  Then we have $j,\ty+j\notin\tA_k$ 
for every $j\in\N$.  Hence it follows from 
Step~2 that
\begin{equation}\label{eq:JMK}
m_j(\tLa_k) + m_{\ty-k-j}(\tLa_k) = 
m_{-j}(\tLa_k) + m_{\ty-k+j}(\tLa)
\qquad\forall\,j\in\N.
\end{equation} 
Thus we can apply Lemma~\ref{le:gm}
to the projection of $\tLa_k$ to the quotient $\C/\Z$.
Hence $\tLa_k$ satisfies~\eqref{eq:jmk}.

Now fix an integer $k\in\{0,1,\dots,k_\tA-1\}$ and suppose,
by induction, that $\tLa_{k+1}$ satisfies~\eqref{eq:jmk}. 
Denote by $\tA'\subset\tal$ and $\tB'\subset\tbe$
the arcs from $\ty-k-1$ to $1$, and by
$\tA''\subset\tal$ and $\tB''\subset\tbe$
the arcs from $1$ to $\ty-k$.  
Then $\tLa_k$ is the catenation 
of the $(\tal,\tbe)$-traces
\begin{equation*}
\begin{split}
\tLa_{k+1} := (0,\ty-k-1,\tilde\w_{k+1}),\quad
&\p\tLa_{k+1} = (0,\ty-k-1,\tA_{k+1},\tB_{k+1}),\\
\tLa' := (\ty-k-1,1,\tilde\w'),\quad
&\p\tLa' = (\ty-k-1,1,\tA',\tB'),\\
\tLa'' :=  (1,\ty-k,\tilde\w''),\quad
&\p\tLa'' =  (1,\ty-k,\tA'',\tB'').
\end{split}
\end{equation*}
Here $\tilde\w'(\tz)$ is the winding number of 
the loop $\tA'-\tB'$ about $\tz\in\C\setminus(\tA'\cup\tB')$ 
and simiarly for $\tilde\w''$. 
Note that $\tLa''$ is the shift of $\tLa_{k+1}$ by $1$. 
The catenation of $\tLa_{k+1}$ and $\tLa'$ 
is the $(\tal,\tbe)$-trace from $0$ to $1$.
Hence it has Viterbo--Maslov index zero, by Lemma~\ref{le:zero}.
and satisfies
\begin{equation}\label{eq:step3a}
m_j(\tLa_{k+1})+m_j(\tLa')=0\qquad
\forall j\in\Z\setminus\{0,1\}.
\end{equation}
Since the catenation of $\tLa'$ and $\tLa''$ is the 
$(\tal,\tbe)$-trace from $\ty-k-1$ to $\ty-k$,
it also has Viterbo--Maslov index zero and satisfies
\begin{equation}\label{eq:step3b}
m_{\ty-k-j}(\tLa')+m_{\ty-k-j}(\tLa'') = 0
\qquad\forall j\in\Z\setminus\{0,1\}. 
\end{equation}
Moreover, by the induction hypothesis, we have 
\begin{equation}\label{eq:step3c}
m_j(\tLa_{k+1})+m_{\ty-k-1-j}(\tLa_{k+1}) = 0
\qquad\forall j\in\Z\setminus\{0\}. 
\end{equation}
Combining the equations~\eqref{eq:step3a}, \eqref{eq:step3b},
and~\eqref{eq:step3c} we find
\begin{eqnarray*}
m_j(\tLa_k)+m_{\ty-k-j}(\tLa_k) 
&=&
m_j(\tLa_{k+1})+m_j(\tLa')+m_j(\tLa'') \\
&&
+\,m_{\ty-k-j}(\tLa_{k+1}) + m_{\ty-k-j}(\tLa') +m_{\ty-k-j}(\tLa'') \\
&=&
m_j(\tLa_{k+1})+m_j(\tLa') \\
&&
+\, m_{\ty-k-j}(\tLa') + m_{\ty-k-j}(\tLa'')  \\
&&
+\, m_{j-1}(\tLa_{k+1}) + m_{\ty-k-j}(\tLa_{k+1})  \\
&=&
0
\end{eqnarray*}
for $j\in\Z\setminus\{0,1\}$. 
For $j=1$ we obtain
\begin{eqnarray*}
m_1(\tLa_k)+m_{\ty-k-1}(\tLa_k) 
&=&
m_1(\tLa_{k+1})+m_1(\tLa')+m_1(\tLa'') \\
&&
+\,m_{\ty-k-1}(\tLa_{k+1}) + m_{\ty-k-1}(\tLa') +m_{\ty-k-1}(\tLa'') \\
&=&
m_1(\tLa_{k+1}) + m_{\ty-k-2}(\tLa_{k+1})  \\
&&
+\, m_0(\tLa_{k+1}) + m_{\ty-k-1}(\tLa_{k+1}) \\
&&
+\, m_{\ty-k-1}(\tLa') + m_1(\tLa')  \\
&=&
2\mu(\tLa_{k+1}) + 2\mu(\tLa') \\
&=&
0.
\end{eqnarray*}
Here the last but one equation follows from 
equation~\eqref{eq:step3c} and Proposition~\ref{prop:maslovC},
and the last equation follows from Lemma~\ref{le:zero}.
Hence $\tLa_k$ satisfies~\eqref{eq:jmk}.
This completes the induction argument for the proof of Step~3.

\medskip\noindent{\bf Step~4.}
{\it Let $\tA,\tB\subset\C$ be as in Lemma~\ref{le:AB}
and let $g\in\Gamma$ such that
$$
g\tx\in\tA\cap\tB,\qquad g\ty\notin\tA\cup\tB.
$$ 
Then~\eqref{eq:gm} holds for $g$ and $g^{-1}$.}

\medskip\noindent
Since $g\ty\notin\tA\cup\tB$ we have $g\ne\id$.
Since $g\tx\in\tA\cap\tB$ we have $\tal=g\tal$ and $\tbe=g\tbe$.  
Hence $\alpha$ and $\beta$ are noncontractible 
embedded circles, and they are homotopic 
(with some orientation) by Lemma~\ref{le:dbae3}.
Thus we may choose $\pi:\C\to\Sigma$, $\tal$, $\tbe$, $\tLa$ 
as in Step~3.  By assumption there is an integer $k\in\tA\cap\tB$. 
Hence $\tA$ and $\tB$ do not contain any negative integers.
Choose $k_\tA,k_\tB\in\N$ such that
$$
\tA\cap\Z=\left\{0,1,\dots,k_\tA\right\},\qquad
\tB\cap\Z=\left\{0,1,\dots,k_\tB\right\}.
$$
Assume without loss of generality that $k_\tA\le k_\tB$. 
For $0\le k\le k_\tA$ denote by $\tA_k\subset\tA$ and 
$\tB_k\subset\tB$ the arcs from $0$ to $\ty-k$
and consider the $(\tal,\tbe)$-trace
$$
\tLa_k:=(0,\ty-k,\tilde\w_k),\qquad
\p\tLa_k:=(0,\ty-k,\tA_k,\tB_k).
$$
In this case 
$$
\tB_k\cap\Z = \{0,1,\dots,k_\tB-k\}.
$$
As in Step~3, it follows by reverse induction on $k$ 
that $\tLa_k$ satisfies~\eqref{eq:jmk} for every $k$.   
We assume again that~$\ty$ is not an integer. 
(Otherwise~\eqref{eq:jmk} follows from Lemma~\ref{le:zero}).
If $k=k_\tA$ then $j,\ty-j\notin\tA_k$ for every $j\in\N$,
hence it follows from Step~2 that $\tLa_k$ satisfies~\eqref{eq:JMK},
and hence it follows from Lemma~\ref{le:gm} for the 
projection of $\tLa_k$ to the annulus $\C/\Z$ that $\tLa_k$
also satisfies~\eqref{eq:jmk}.  The induction step is verbatim the 
same as in Step~3 and will be omitted. 
This proves Step~4.

\medskip\noindent{\bf Step~5.}
{\it We prove the proposition.}

\medskip\noindent
If both points $g\tx,g\ty$ are contained in $\tA$
(or in $\tB$) then $g=\id$ by Lemma~\ref{le:AB}, and in this 
case equation~\eqref{eq:GM} is a tautology.
If both points $g\tx,g\ty$ are not contained in $\tA\cup\tB$,
equation~\eqref{eq:GM} has been established in Lemma~\ref{le:wg}.
Moreover, we can interchange $\tx$ and $\ty$
or $\tA$ and $\tB$ as in the proof of Step~2.
Thus Steps~1 and~4 cover the case where precisely one 
of the points $g\tx,g\ty$ is contained in $\tA\cup\tB$
while Step~3 covers the case where $g\ne\id$ and 
both points $g\tx,g\ty$ are contained in $\tA\cup\tB$. 
This shows that equation~\eqref{eq:GM} 
holds for every $g\in\Gamma\setminus\{\id\}$.
Hence, by Lemma~\ref{le:gm}, equation~\eqref{eq:gm}
holds for every $g\in\Gamma\setminus\{\id\}$.
This proves Proposition~\ref{prop:gm}.
\end{proof}

\begin{proof}[Proof of Theorem~\ref{thm:maslov}
in the Non Simply Connected Case]
Choose a universal covering 
$\pi:\C\to\Sigma$ and let $\Gamma$, $\tal$, $\tbe$, 
and $\tLa=(\tx,\ty,\tilde\w)$ be as in Proposition~\ref{prop:gm}.  
Then
$$
m_x(\Lambda)+m_y(\Lambda)
-m_\tx(\tLa)-m_\ty(\tLa)
= \sum_{g\ne\id}\left(m_{g\tx}(\tLa) 
+ m_{g^{-1}\ty}(\tLa)\right)
= 0.
$$
Here the last equation follows from Proposition~\ref{prop:gm}.
Hence, by Proposition~\ref{prop:maslovC}, we have
$$
\mu(\Lambda) = \mu(\tLa)
=\frac{m_\tx(\tLa)+m_\ty(\tLa)}{2}
=\frac{m_x(\Lambda)+m_y(\Lambda)}{2}.
$$
This proves~\eqref{eq:maslov} in the case where 
$\Sigma$ is not simply connected.
\end{proof}


\appendix

\section{The Space of Paths}\label{app:path}\label{app:A}

We assume throughout that $\Sigma$ is a connected oriented smooth
$2$-manifold without boundary and 
$
\alpha,\beta\subset\Sigma
$
are two embedded loops.  Let
$$
    \Om_{\alpha,\beta}
    := \left\{x\in\Cinf([0,1],\Sigma)\,|\,
       x(0)\in\alpha,\,x(1)\in\beta\right\}
$$
denote the space of paths connecting $\alpha$ to $\beta$.

\begin{proposition}\label{prop:dbae}
Assume that $\alpha$ and $\beta$ are not contractible 
and that $\alpha$ is not isotopic to $\beta$.  
Then each component of $\Om_{\alpha,\beta}$ 
is simply connected and hence
$
H^1(\Om_{\alpha,\beta};\R)=0.
$
\end{proposition}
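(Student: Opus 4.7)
The plan is to analyze the evaluation fibration $\mathrm{ev}\colon \Om_{\alpha,\beta}\to\alpha\times\beta$, $x\mapsto (x(0),x(1))$, whose fiber over $(p,q)$ is the ordinary path space $P(\Sigma;p,q)$ of smooth paths from $p$ to $q$. Since each connected component of $\Om_{\alpha,\beta}$ is a Fr\'echet manifold, it suffices to show that each component is simply connected. The non-contractibility of $\alpha$ rules out $\Sigma=S^2$, so the oriented surface $\Sigma$ has contractible universal cover and is aspherical; hence $\pi_k(\Sigma)=0$ for $k\ge 2$, which gives $\pi_k(P(\Sigma;p,q))=0$ for $k\ge 1$ while $\pi_0(P(\Sigma;p,q))\cong\pi_1(\Sigma,p)$.

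Fix a base path $x_0$ from $p\in\alpha$ to $q\in\beta$ and write the long exact sequence of the fibration based at $x_0$:
\[
0\TO\pi_1(\Om_{\alpha,\beta},x_0)\TO\pi_1(\alpha)\times\pi_1(\beta)\xrightarrow{\partial}\pi_1(\Sigma,p).
\]
A direct computation (sliding the endpoints of $x_0$ around the loops $\alpha$ and $\beta$) identifies the image of $\partial$ with the subgroup generated by $a:=[\alpha]$ and $b:=[x_0\beta x_0^{-1}]$. Thus simple-connectivity of the component containing $x_0$ reduces to showing that $a$ and $b$ generate a rank-two free abelian subgroup of $\pi_1(\Sigma,p)$.

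Suppose for contradiction that $a^m=b^n$ with $(m,n)\ne(0,0)$. Oriented surface groups are torsion-free, so $m$ and $n$ must both be non-zero; and non-contractible simple closed curves on an orientable surface represent primitive elements of $\pi_1$, so neither $a$ nor $b$ is a proper power. If $\Sigma$ is a torus then $\pi_1(\Sigma)\cong\Z^2$, and the linear relation $ma=nb$ combined with primitivity forces $a=\pm b$. Otherwise $\pi_1(\Sigma)$ is either a free group (non-compact $\Sigma$) or a closed surface group of genus $\ge 2$, and in both cases centralizers of non-trivial elements are cyclic; hence $a$ and $b$ both lie in $C(a^m)=\langle c\rangle$ for some $c$, and primitivity again forces $a=b^{\pm 1}$. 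Either way $\alpha$ is freely homotopic to $\beta$ (with some orientation), and Epstein's theorem on isotopy of freely homotopic embedded curves on orientable surfaces (\cite[Theorem~4.1]{EPSTEIN}) then shows $\alpha$ is isotopic to $\beta$, contradicting the hypothesis. This gives $\pi_1(\Om_{\alpha,\beta})=0$ on each component, whence $H^1(\Om_{\alpha,\beta};\R)=0$. The main hurdle is this last algebraic step, which rests on two classical but non-trivial facts about surface groups: the primitivity of essential simple closed curves, and the cyclicity of centralizers (or, in the torus case, the $\Z^2$ computation replacing it).
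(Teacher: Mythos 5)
Your proof is correct in substance and reaches the right conclusion, but it takes a genuinely different route from the paper. The paper argues directly: a based loop in $\Om_{\alpha,\beta}$ is a map $u\colon S^1\times[0,1]\to\Sigma$, one shows the boundary degrees $k_0,k_1$ vanish, and then factors $u$ through $S^2$ and invokes $\pi_2(\Sigma)=0$; you replace all of this scaffolding with the long exact sequence of the evaluation fibration $\mathrm{ev}\colon\Om_{\alpha,\beta}\to\alpha\times\beta$, where asphericity of $\Sigma$ kills $\pi_1$ of the fiber and simultaneously takes care of the final $\pi_2(\Sigma)=0$ step. The real difference is in how the key algebraic fact is established. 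The paper's Lemma~\ref{le:dbae3} (if $\gamma_0^{k_0}\sim\gamma_1^{k_1}$ with $k_0,k_1\neq 0$ then $\gamma_1\sim\gamma_0^{\pm1}$) is proved entirely by covering-space geometry: the cylinder cover $\tSi$ generated by $\gamma_0$, embedded curves in cylinders being null-homotopic or core, etc.\ (Lemmas~\ref{le:dbae1} and~\ref{le:dbae2}). You instead cite classical surface-group algebra --- torsion-freeness, primitivity of essential simple closed curves, cyclicity of centralizers in free and hyperbolic surface groups, and a separate $\Z^2$ computation for the torus --- to rule out $a^m=b^n$ with $(m,n)\neq(0,0)$. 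Both routes then finish with Epstein's isotopy theorem. Your version is cleaner once one grants the cited facts, but those facts themselves are typically proved by exactly the kind of cylinder-cover argument that the paper spells out, so the two approaches are close cousins rather than truly independent.

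Two small imprecisions worth fixing. First, the image of $\partial\colon\Z^2\to\pi_1(\Sigma,p)$, $(k_0,k_1)\mapsto a^{-k_0}b^{k_1}$, is in general only a subset, not ``the subgroup generated by $a$ and $b$'' (since $a$ and $b$ need not commute, $\partial$ is not even a homomorphism). Second, and more importantly, the reduction is \emph{not} to showing that $a$ and $b$ generate a rank-two free abelian subgroup; it is to showing that $\ker\partial=\partial^{-1}(\ast)$ is trivial, i.e.\ that $a^m=b^n$ forces $m=n=0$. These are not the same: if $a,b$ generate a rank-two free (non-abelian) subgroup, then $\ker\partial$ is trivial but no free abelian subgroup of rank two is generated. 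Fortunately, the statement you actually go on to prove ($a^m=b^n$ with $(m,n)\neq(0,0)$ leads to a contradiction) is precisely the correct one, so the argument is sound; only the stated ``reduces to'' claim needs to be rephrased.
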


The proof was explained to us by David Epstein~\cite{DBAE}.
It is based on the following three lemmas. 
We identify $S^1\cong\R/\Z$.

\begin{lemma}\label{le:dbae1}
Let $\gamma:S^1\to\Sigma$ be a noncontractible loop 
and denote by
$$
\pi:\tSi\to\Sigma
$$
the covering generated by $\gamma$.
Then $\tSi$ is diffeomorphic to the cylinder.
\end{lemma}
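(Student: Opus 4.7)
The plan is to compute $\pi_1(\tSi)$ directly from the defining property of the cover and then to invoke the classification of surfaces to conclude that $\tSi$ must be the open cylinder.

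First I would identify the fundamental group of $\tSi$. The cover $\pi:\tSi\to\Sigma$ generated by $\gamma$ is by definition the one with $\pi_*\pi_1(\tSi,\tilde{x}_0)=\langle[\gamma]\rangle\subset\pi_1(\Sigma,x_0)$, and since $\pi_*$ is injective for any covering this identifies $\pi_1(\tSi)$ with $\langle[\gamma]\rangle$. Because $\gamma$ is noncontractible, $\Sigma$ is not diffeomorphic to $S^2$, so its universal cover is $\R^2$; in particular $\Sigma$ is aspherical and $\pi_1(\Sigma)$ acts freely on a contractible space, hence is torsion free. Therefore $\langle[\gamma]\rangle\cong\Z$ and $\pi_1(\tSi)\cong\Z$.

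Second I would pin down the diffeomorphism type. The surface $\tSi$ is a connected, oriented, smooth $2$-manifold without boundary (all these properties lift to covers) whose fundamental group is the finitely generated group $\Z$. By the Ker\'ekj\'art\'o--Richards classification of noncompact surfaces, a connected $2$-manifold with finitely generated fundamental group is of finite topological type, i.e.\ homeomorphic to a closed surface with finitely many points removed; among orientable such surfaces without boundary, the only one with $\pi_1\cong\Z$ is the twice-punctured sphere, which is diffeomorphic to $S^1\times\R$. An equivalent, more hands-on route is to observe that $\tSi=\R^2/\Z$, where $\Z$ acts freely, properly discontinuously, and by orientation preserving diffeomorphisms on the universal cover $\R^2$; the Brouwer plane translation theorem in its smooth-linearization form then shows that the generator is smoothly conjugate to $(x,y)\mapsto(x+1,y)$, so the quotient is the standard cylinder.

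The main obstacle is precisely this last step: although $\pi_1(\tSi)\cong\Z$ is straightforward, the fundamental group alone does not in general determine the diffeomorphism type of a noncompact surface (infinitely many ends or infinite genus are a priori compatible with $\pi_1=\Z$ being generated by one loop, before one knows $\pi_1$ of the whole surface is cyclic). Ruling those out requires either the Ker\'ekj\'art\'o--Richards classification or the Brouwer--Ker\'ekj\'art\'o plane-translation theorem; once that structural input is in place, the identification with $S^1\times\R$ is immediate.
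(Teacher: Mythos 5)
Your proof is correct on its first route and takes a genuinely different path from the paper. You compute $\pi_1(\tSi)\cong\Z$ directly --- using that $\pi_1(\Sigma)$ is torsion free because $\Sigma$ is aspherical with contractible universal cover --- and then invoke the Ker\'ekj\'art\'o--Richards classification of noncompact surfaces: a connected orientable boundaryless surface with infinite cyclic fundamental group has finite type and must be the twice-punctured sphere, i.e.\ the cylinder. The paper instead proceeds geometrically via uniformization: it equips $\Sigma$ with a metric of constant curvature, so the universal cover is isometric to flat $\R^2$ or to $\HH^2$, and recognizes $\tSi$ as the quotient by a cyclic group of isometries generated by a single nontrivial orientation-preserving, fixed-point-free isometry (a translation, respectively a hyperbolic or parabolic element of $\PSL(2,\R)$); every such quotient is visibly a cylinder. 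Both arguments lean on a substantial background theorem --- classification of noncompact surfaces versus uniformization --- so neither is clearly more elementary. The paper's approach has the virtue of producing an explicit model with an explicit diffeomorphism, while yours cleanly isolates the topological content: $\pi_1\cong\Z$, orientability, and no boundary already pin down the surface. Your proof also makes explicit the point (only implicit in the paper's use of the word ``generated'') that $\langle[\gamma]\rangle$ is infinite cyclic.

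One caveat concerns your ``more hands-on'' alternative. The Brouwer plane translation theorem in its standard form gives translation arcs and translation domains for a fixed-point-free orientation-preserving homeomorphism of $\R^2$; it does not assert global topological conjugacy to a translation. The stronger statement you want --- that a free, properly discontinuous, orientation-preserving $\Z$-action on $\R^2$ is conjugate to the standard translation --- is a separate theorem of Ker\'ekj\'art\'o whose usual proof in fact passes through the identification of the quotient surface as a cylinder, so as phrased that route is in danger of being circular. Cite the Brouwer--Ker\'ekj\'art\'o translation theorem explicitly if you want to go that way; otherwise the Richards classification route is the one that genuinely closes the argument.
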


\begin{proof}
By assumption, $\Sigma$ is oriented and has a nontrivial
fundamental group.  By the uniformization theorem,
choose a metric of constant curvature.
Then the universal cover of $\Sigma$ is isometric
to either $\R^2$ with the flat metric or
to the upper half space $\HH^2$ with the
hyperbolic metric.
The $2$-manifold $\tSi$
is a quotient of the universal cover of
$\Sigma$ by the subgroup of the group
of covering transformations generated by
a single element (a translation in the
case of $\R^2$ and a hyperbolic element
of $\PSL(2,\R)$ in the case of $\HH^2$).
Since $\gamma$ is not contractible, this element
is not the identity.  Hence $\tSi$
is diffeomorphic to the cylinder.
\end{proof}

\begin{lemma}\label{le:dbae2}
Let $\gamma:S^1\to\Sigma$ be a noncontractible loop 
and, for $k\in\Z$, define $\gamma^k:S^1\to\Sigma$
by 
$$
\gamma^k(s):=\gamma(ks).
$$
Then $\gamma^k$ is contractible if and only if $k=0$.
\end{lemma}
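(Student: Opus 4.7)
The ``if'' direction is immediate since $\gamma^0$ is the constant loop at $\gamma(0)$. For the converse, my plan is to invoke Lemma~\ref{le:dbae1} and reduce the question to the elementary fact that $\pi_1$ of a cylinder is $\Z$.

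Let $\pi:\tSi\to\Sigma$ be the covering generated by $\gamma$, so that $\pi_*\pi_1(\tSi)=\langle[\gamma]\rangle\subset\pi_1(\Sigma)$. Because $[\gamma]$ lies in the image of $\pi_*$, the loop $\gamma$ lifts to a based loop $\tga\colon S^1\to\tSi$; because $\pi_*$ is injective, $[\tga]$ is sent to a generator of $\langle[\gamma]\rangle$ and is therefore itself a generator of $\pi_1(\tSi)$. By Lemma~\ref{le:dbae1} the surface $\tSi$ is a cylinder, so $\pi_1(\tSi)\cong\Z$ and $[\tga]$ corresponds to $\pm 1$. The reparametrized loop $\gamma^k$ also represents an element of $\langle[\gamma]\rangle$, hence lifts to the loop $s\mapsto\tga(ks)$ in $\tSi$, whose class in $\pi_1(\tSi)\cong\Z$ is $k[\tga]$.

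If $\gamma^k$ were contractible in $\Sigma$ for some $k\neq 0$, then a null-homotopy would lift, via the homotopy lifting property, to a null-homotopy of $s\mapsto\tga(ks)$ in $\tSi$, contradicting $k[\tga]\neq 0$. The only point requiring care in this argument is the claim that $[\tga]$ generates $\pi_1(\tSi)$, rather than being some proper multiple of a generator; this is guaranteed by the construction of $\tSi$ as the covering associated with the cyclic subgroup $\langle[\gamma]\rangle$ together with the injectivity of $\pi_*$, so I do not anticipate any real obstacle.
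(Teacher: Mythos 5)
Your proof is correct and follows the same route as the paper: lift $\gamma^k$ to the cylindrical covering $\tSi$ from Lemma~\ref{le:dbae1} and observe that the lift represents $k$ times a generator of $\pi_1(\tSi)\cong\Z$, hence is noncontractible for $k\ne 0$. The paper's proof is just a one-line version of exactly this argument; you have filled in the justification that $[\tga]$ generates $\pi_1(\tSi)$, which is the only point worth spelling out.
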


\begin{proof}
Let $\pi:\tSi\to\Sigma$ be as in Lemma~\ref{le:dbae1}.
Then, for $k\ne0$, the loop $\gamma^k:S^1\to\Sigma$ lifts to
a noncontractible loop in $\tSi$.
\end{proof}

\begin{lemma}\label{le:dbae3}
Let $\gamma_0,\gamma_1:S^1\to\Sigma$ be noncontractible 
embedded loops and suppose that $k_0,k_1$ 
are nonzero integers such that $\gamma_0^{k_0}$ 
is homotopic to $\gamma_1^{k_1}$.  
Then either $\gamma_1$ is homotopic to $\gamma_0$ and $k_1=k_0$
or $\gamma_1$ is homotopic to ${\gamma_0}^{-1}$ and $k_1=-k_0$.
\end{lemma}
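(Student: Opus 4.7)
The plan is to exploit Lemmas~\ref{le:dbae1} and~\ref{le:dbae2} by lifting $\gamma_1$ to the cylinder covering generated by $\gamma_0$ and then invoking the classification of embedded loops in a cylinder. Let $\pi:\tilde\Sigma\to\Sigma$ be the covering generated by $\gamma_0$; by Lemma~\ref{le:dbae1}, $\tilde\Sigma$ is a cylinder with $\pi_1(\tilde\Sigma)\cong\Z$ generated by a lift $\tilde\gamma_0$ of $\gamma_0$. The hypothesis $\gamma_0^{k_0}\sim\gamma_1^{k_1}$ implies that $[\gamma_1^{k_1}]$ lies in $\pi_1(\tilde\Sigma)$, so $\gamma_1^{k_1}$ lifts to a loop in $\tilde\Sigma$.

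First I would lift $\gamma_1$ as a path $\tilde\gamma_1:[0,1]\to\tilde\Sigma$; then $\tilde\gamma_1(1)=T\tilde\gamma_1(0)$ for a deck transformation $T$. Iterating $k_1$ times produces a lift of $\gamma_1^{k_1}$ ending at $T^{k_1}\tilde\gamma_1(0)$, which must coincide with $\tilde\gamma_1(0)$; freeness of the deck action then forces $T^{k_1}=\id$, so $T$ has finite order $k$ dividing $k_1$. Concatenating $k$ iterates yields an immersed loop $\bar\gamma_1$ in $\tilde\Sigma$ lifting $\gamma_1^k$. One then has to verify that $\bar\gamma_1$ is actually embedded: any interior self-intersection projects to a self-intersection of $\gamma_1$ (excluded by the embedding hypothesis) unless it corresponds to the two preimages being deck translates, which would force $T^m=\id$ for some $0<m<k$, contradicting the definition of $k$. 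This embedding step is the technical heart of the argument and the main obstacle.

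Since $\gamma_1^k$ is noncontractible by Lemma~\ref{le:dbae2} and $\pi$ is a covering, $\bar\gamma_1$ is a noncontractible embedded loop in the cylinder $\tilde\Sigma$, hence isotopic to $\tilde\gamma_0^\epsilon$ for some $\epsilon\in\{+1,-1\}$. Projecting yields a free homotopy $\gamma_1^k\sim\gamma_0^\epsilon$ in $\Sigma$. Taking $(k_1/k)$-th powers and combining with $\gamma_1^{k_1}\sim\gamma_0^{k_0}$ gives $\gamma_0^{\epsilon k_1/k}\sim\gamma_0^{k_0}$; lifting this free homotopy to $\tilde\Sigma$ and comparing winding numbers of the boundary loops around the core circle forces $\epsilon k_1/k=k_0$.

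To finish, I would apply the same construction with the roles of $\gamma_0$ and $\gamma_1$ exchanged, obtaining an analogous identity $\epsilon'k_0/k'=k_1$ for some integer $k'\ge 1$ and sign $\epsilon'\in\{+1,-1\}$. Multiplying the two relations yields $\epsilon\epsilon'kk'=1$, so $k=k'=1$ and $\epsilon=\epsilon'$. Hence $\gamma_1=\gamma_1^k\sim\gamma_0^\epsilon$, and $k_1=\epsilon k_0$, which is precisely the dichotomy in the statement.
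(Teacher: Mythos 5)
Your outline—lift to the $\gamma_0$-cylinder $\tilde\Sigma$, produce an embedded closed lift of some power of $\gamma_1$, compare it with the core circle, and finish by symmetry—follows the paper's proof, but the mechanism you use to produce the closed lift has a genuine gap. You assert that a lift $\tilde\gamma_1:[0,1]\to\tilde\Sigma$ of $\gamma_1$ satisfies $\tilde\gamma_1(1)=T\,\tilde\gamma_1(0)$ for a deck transformation $T$, and then use freeness of the deck action and the finite order of $T$. This presupposes that the covering $\pi:\tilde\Sigma\to\Sigma$ is normal, so that deck transformations act transitively on fibers. But $\langle[\gamma_0]\rangle$ is in general not a normal subgroup of $\pi_1(\Sigma)$: when $\Sigma$ is hyperbolic and $\gamma_0$ is embedded (hence represents a primitive element), the normalizer of $\langle[\gamma_0]\rangle$ is $\langle[\gamma_0]\rangle$ itself, so the deck group of $\tilde\Sigma\to\Sigma$ is trivial and such a $T\ne\id$ simply does not exist. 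Both the step $T^{k_1}=\id$ and your embeddedness argument (which invokes ``deck translates'' and $T^m=\id$) therefore do not apply in the cases that actually matter.

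The conclusion you want can be reached without any deck transformations. Since $\gamma_1^{k_1}$ is freely homotopic to $\gamma_0^{k_0}$, which lifts to a closed loop, $\gamma_1^{k_1}$ admits a closed lift starting at some $\tilde p_0\in\pi^{-1}(\gamma_1(0))$. The sequence $\tilde p_0,\tilde p_1,\dots$ of successive endpoints of lifts of $\gamma_1$ is then periodic with period $N$ dividing $k_1$, and concatenating $N$ such lifts gives a closed lift of $\gamma_1^N$. It is automatically embedded: its image is the connected component of $\pi^{-1}(\gamma_1)$ through $\tilde p_0$, which is a circle covering $\gamma_1$ with degree $\pm N$, and the concatenated lift traverses that circle exactly once. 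This is how the paper obtains the nonzero integer $j_1$ with $\gamma_1^{j_1}$ lifting to an embedding. With this repair, the rest of your argument (classifying the embedded loop in the cylinder, the winding-number comparison giving $\eps k_1/N=k_0$, and the symmetric relation) does go through; the paper reaches the same endpoint slightly more directly by first deducing $\gamma_0\sim\gamma_1^{j_1}$ and $\gamma_1\sim\gamma_0^{j_0}$ with $j_0j_1=1$, and only then bringing $k_0,k_1$ into play via Lemma~\ref{le:dbae2}.
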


\begin{proof}
Let $\pi:\tSi\to\Sigma$ be the covering generated by $\gamma_0$. 
Then ${\gamma_0}^{k_0}$ lifts to a closed curve in $\tSi$ 
and is homotopic to ${\gamma_1}^{k_1}$.
Hence ${\gamma_1}^{k_1}$ lifts
to a closed immersed curve in $\tSi$.
Hence there exists a nonzero integer $j_1$
such that ${\gamma_1}^{j_1}$ lifts
to an embedding $S^1\to\tSi$.
Any embedded curve in the cylinder is either
contractible or is homotopic to a generator.
If the lift of ${\gamma_1}^{j_1}$ were contractible
it would follow that ${\gamma_0}^{k_0}$ is contractible,
hence, by Lemma~\ref{le:dbae2}, $k_0=0$ 
in contradiction to our assumption.
Hence the lift of ${\gamma_1}^{j_1}$
to $\tSi$ is not contractible.
With an appropriate sign of $j_1$ it follows that
the lift of ${\gamma_1}^{j_1}$ is homotopic
to the lift of $\gamma_0$.
Interchanging the roles of $\gamma_0$ and $\gamma_1$,
we find that there exist nonzero integers
$j_0,j_1$ such that
$$
\gamma_0\sim{\gamma_1}^{j_1},\qquad
\gamma_1\sim{\gamma_0}^{j_0}
$$
in $\tSi$.  Hence $\gamma_0$ is homotopic to 
${\gamma_0}^{j_0j_1}$ in the free loop space of $\tSi$.
Since the homotopy lifts to the cylinder $\tSi$
and the fundamental group of $\tSi$
is abelian, it follows that 
$$
j_0j_1=1.
$$
If $j_0=j_1=1$ then $\gamma_1$ is homotopic to $\gamma_0$,
hence $\gamma_0^{k_1}$ is homotopic to ${\gamma_0}^{k_0}$,
hence ${\gamma_0}^{k_0-k_1}$ is contractible, 
and hence $k_0-k_1=0$, by Lemma~\ref{le:dbae2}.
If $j_0=j_1=-1$ then $\gamma_1$ is homotopic to ${\gamma_0}^{-1}$,
hence $\gamma_0^{-k_1}$ is homotopic to ${\gamma_0}^{k_0}$,
hence ${\gamma_0}^{k_0+k_1}$ is contractible, and hence $k_0+k_1=0$,
by Lemma~\ref{le:dbae2}.  This proves Lemma~\ref{le:dbae3}.
\end{proof}

\begin{proof}[Proof of Proposition~\ref{prop:dbae}]
Orient $\alpha$ and $\beta$ and
and choose orientation preserving
diffeomorphisms 
$$
\gamma_0:S^1\to\alpha,\qquad
\gamma_1:S^1\to\beta.
$$
A closed loop in $\Om_{\alpha,\beta}$ gives rise
to a map $u:S^1\times[0,1]\to\Sigma$ such that
$$
u(S^1\times\{0\})\subset\alpha,\qquad
u(S^1\times\{1\})\subset\beta.
$$
Let $k_0$ denote the degree of $u(\cdot,0):S^1\to\alpha$
and $k_1$ denote the degree of $u(\cdot,1):S^1\to\beta$.
Since the homotopy class of a map $S^1\to\alpha$
or a map $S^1\to\beta$ is determined by the degree
we may assume, without loss of generality, that
$$
u(s,0) = \gamma_0(k_0s),\qquad
u(s,1) = \gamma_1(k_1s).
$$
If one of the integers $k_0,k_1$ vanishes, so does the other,
by Lemma~\ref{le:dbae2}.  If they are both nonzero then 
$\gamma_1$ is homotopic to either $\gamma_0$ 
or $\gamma_0^{-1}$, by Lemma~\ref{le:dbae3}.
Hence $\gamma_1$ is isotopic to either $\gamma_0$ 
or $\gamma_0^{-1}$, by~\cite[Theorem~4.1]{EPSTEIN}.
Hence $\alpha$ is isotopic to $\beta$, in contradiction
to our assumption.  This shows that
$$
k_0=k_1=0.
$$
With this established it follows that the map
$
u:S^1\times[0,1]\to\Sigma
$
factors through a map $v:S^2\to\Sigma$
that maps the south pole to $\alpha$ and the north pole
to $\beta$.  Since $\pi_2(\Sigma)=0$ it follows
that $v$ is homotopic, via maps with fixed north and
south pole, to one of its meridians.
This proves Proposition~\ref{prop:dbae}.
\end{proof}


\end{document}